\renewcommand{\epsilon}{\varepsilon}
\def\R{\mathbb{R}}
\def\a{\alpha}
\def\e{\epsilon}
\def\t{\tau}
\def\x{\xi}
\def\th{\theta}
\def\k{\kappa}
\def\wt{\widetilde}
\newtheorem{theorem}{Theorem}[section]
\newtheorem{lemma}[theorem]{Lemma}
\newtheorem{proposition}[theorem]{Proposition}
\newtheorem{corollary}[theorem]{Corollary}
\newtheoremstyle{TheoremNum}
        {\topsep}{\topsep}              
        {\itshape}                      
        {}                              
        {\bfseries}                     
        {.}                             
        { }                             
        {\thmname{#1}\thmnote{ \bfseries #3}}
    \theoremstyle{TheoremNum}
\title[Ancient solutions for the $\k^\a$ flow in $\mathbb R^2$]{Ancient solutions for flow by powers of the curvature in $\mathbb R^2$}
\author[Bourni]{Theodora~Bourni}
\address{Department of Mathematics\\ University of Tennessee\\1403 Circle Dr, Knoxville, TN 37916 USA}
\email{tbourni@utk.edu}
\author[Clutterbuck]{Julie~Clutterbuck}
\address{School of Mathematics \\
Monash University \\
9 Rainforest Walk,
VIC 3800 Australia}
\email{Julie.Clutterbuck@monash.edu}
\author[Nguyen]{Xuan~Hien~Nguyen}
\address{Department of Mathematics \\ Iowa State University \\ 411 Morrill Rd, Ames, IA 50011 USA}
\email{xhnguyen@iastate.edu}
\author[Stancu]{Alina~Stancu}
\address{Department of Mathematics and Statistics \\ Concordia University\\ 1455 Blvd. de Maisonneuve Ouest \\ Montreal, QC, H3G 1M8 Canada}
\email{alina.stancu@concordia.ca}
\author[Wei]{Guofang~Wei}
\address{Department of Mathematics \\ UC  Santa Barbara\\ Santa Barbara, CA 93106 USA}
\email{wei@math.ucsb.edu}
\author[Wheeler]{Valentina-Mira~Wheeler}
\address{Valentina-Mira Wheeler \\ Institute for Mathematics and its Applications \\ University of Wollongong\\ Northfields Avenue\\ Wollongong, NSW 2522 Australia}
\email{vwheeler@uow.edu.au}
\thanks{The research of Julie Clutterbuck was supported by grant FT1301013 of the Australian Research Council. The research of Xuan Hien Nguyen was supported by grant 579756 of the Simons Foundation. The research of Alina Stancu was supported by NSERC Discovery Grant  RGPIN 327635. The research of Guofang Wei was supported by NSF Grant DMS 1811558. The research of Valentina-Mira Wheeler was supported by grants DP180100431 and DE190100379 of the Australian Research Council.}
\begin{document}
\maketitle

\begin{abstract}
 We construct a new compact convex embedded ancient solution of the $\k^\a$ flow in $\R^2$, $\a\in(\frac12,1)$ that lies between two parallel lines. 
Using this solution we classify all convex ancient solutions of the $\k^\a$ flow in $\R^2$, for $\a\in(\frac23,1)$. Moreover, we show that any non-compact convex embedded ancient solution of the $\k^\a$ flow in $\R^2$, $\a\in(\frac12,1)$ must be a translating solution. \end{abstract}

\section{introduction}

A smooth one-parameter family $\{\Gamma_t\}_{t\in I}$ of connected, immersed, planar curves $\Gamma_t\subset \R^2$ \emph{evolves by the $\k^\a$ flow}, $\a>0$, if 
\begin{equation}\label{ka-flow}
\partial_t\gamma(\theta,t)=-\kappa^\a(\theta,t)\nu(\theta,t)\;\;\text{for each}\;\; (\theta,t)\in \Theta\times I
\end{equation}
for some smooth family $\gamma:\Theta\times I\to\R^2$ of immersions of $\Gamma_t$, where $\k(\theta, t)$ and $\nu(\cdot,t)$ are the curvature and the unit normal vector of $\gamma(\cdot,t)$. Our sign convention is that $\vec \kappa=-\kappa\nu$ is the curvature vector. 

We refer to a solution as \emph{compact} if $\Theta\cong S^1$ and \emph{convex} if each of the timeslices $\Gamma_t$ bounds a convex domain, in which case the immersions $\gamma(\cdot,t)$ are proper embeddings.
Both compactness and convexity are properties that are preserved under the flow and it is known that, if the initial curve $\Gamma_0$ is compact and convex, the family converges to a single point in finite time \cite{BA98}. Moreover, if the initial surface is convex then it will immediately become strictly convex and smooth \cite{BA98}. 
The solution $\{\Gamma_t\}_{t\in I}$ is called \emph{ancient} if $I$ contains the interval $(-\infty,t_0)$ for some $t_0\in\R$. In the compact case, by a time translation, we will assume that $I=(-\infty, 0)$. The goal of this paper is to construct and study convex ancient solutions for $\a\in (1/2, 1]$ as well as provide certain classification results.

When $\a=1$, the flow is the famous \emph{curve shortening flow} and such a classification is already known. Daskalopoulos, Hamilton and \v Se\v sum showed that the shrinking circles and the Angenent ovals 
 are the only \emph{compact} examples \cite{DHS}. Their arguments are based on the analysis of a certain Lyapunov functional. Recently, \cite{BLT3}, a new proof of this result was given which  removes the compactness hypothesis by adding two more solutions: the stationary line and the self-translating grim reaper.
 This proof uses in an essential way X.J. Wang's dichotomy, which states that a convex ancient solution $\{\Gamma_t\}_{t\in(-\infty,0)}$ must either be entire (i.e. sweep out the whole plane, in the sense that $\cup_{t<0}\Omega_t=\R^2$, where $\Omega_t$ is the convex body bounded by $\Gamma_t$) or else lie in a  strip/slab region (the region bounded by two parallel  lines) \cite[Corollary 2.1]{Wa11}. Wang also proved that the only entire examples are the shrinking circles~\cite[Theorem~1.1]{Wa11}, thus reducing the classification question among solutions that lie in a slab.

In this paper, we aim to construct ancient compact convex solutions to the $\k^\a$ flow, for all $\a\in (1/2, 1]$, that lie in strip regions. These solutions can be thought of as the analogue to the Angenent ovals for the curve shortening flow.
Moreover, we prove that for $\a\in (\frac23, 1)$ the  Daskalopoulos, Hamilton and \v Se\v sum classification result extends to the $\k^\a$ flow. Finally, we show that any ancient non-compact convex solution to the $\k^\a$ flow, for all $\a\in (1/2, 1]$, must be a translating solution, therefore it is unique modulo rigid motions and parabolic rescalings by \cite{Urbas98}. We remark that this in fact is a corollary of the more involved result \cite[Theorem 1.1]{CCD18}.



\begin{theorem}\label{main thm} For any $\a\in (1/2, 1]$ there exists a convex compact ancient solution to the $\k^\a$ flow that lies between two parallel lines.
\end{theorem}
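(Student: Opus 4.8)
The plan is to construct the ancient solution as a limit of a sequence of "old-but-not-ancient" solutions, mimicking the classical construction of the Angenent oval. First I would fix a slab, say $\{|x_1| < \pi/2\}$ after normalization, and for each integer $k\geq 1$ consider the unique compact convex solution $\{\Gamma^k_t\}$ of the $\k^\a$ flow that starts at time $t = -k$ from a fixed large convex curve contained in the slab (for instance a long "stadium"-type curve symmetric under the reflections $x_1\mapsto -x_1$ and $x_2\mapsto -x_2$, with height on the order of $k$) and shrinks to the origin at time $0$ by \cite{BA98}. Each $\Gamma^k_t$ is strictly convex and smooth for $t\in(-k,0)$, and by the avoidance/comparison principle, together with the fact that the two vertical lines $x_1 = \pm\pi/2$ are themselves not barriers for $\k^\a$ flow but suitable large circles or translating solitons are, one must set up barriers that confine $\Gamma^k_t$ to a fixed bounded-width slab uniformly in $k$. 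The symmetry is preserved by uniqueness of the flow.

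The key step is to extract uniform-in-$k$ estimates on a fixed time interval so that a subsequence converges. I would normalize each $\Gamma^k_t$ so that it passes through a fixed point, e.g. by a time-translation arrange that $\Gamma^k_{t}$ has a prescribed width (distance between the two vertical supporting lines) equal to some fixed $w_0<\pi$ at time $t=-1$; this is possible because the width is continuous and monotone in time and runs over $(0,\pi)$-ish values. Then on $[-2,-1]$ say, one needs: (i) a uniform lower bound on the inradius / uniform lower bound on curvature from above is automatic but a lower curvature bound away from the "tips" is the real issue, and (ii) uniform higher-order derivative bounds, which follow from interior estimates for the (degenerate-when-$\k\to0$) parabolic equation once $\k$ is bounded above and below on compact subsets. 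The standard route is to work with the support function $u(\theta,t)$, for which the flow becomes $u_t = -(u_{\theta\theta}+u)^{-\a}$ (up to sign conventions), a scalar parabolic PDE; uniform bounds on $u$ from the slab confinement plus the shrinking-to-origin behavior give $C^0$ bounds, and then one bootstraps. The main obstacle I expect is precisely controlling the curvature from below uniformly — i.e. ruling out that the limiting curve degenerates to a segment — and this is where the slab width normalization and a lower barrier (an explicit self-similar shrinker or translator sitting inside) must be used; for $\a<1$ the degeneracy of the operator at $\k=0$ makes this delicate, which is likely why the hypothesis $\a>1/2$ enters.

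Having obtained a subsequential limit $\{\Gamma_t\}_{t\in(-\infty,0)}$, I would check it is a genuine solution: it is nontrivial (width bounded below by the normalization, hence not a point for $t<0$), compact and convex (closed conditions under the limit), contained in the fixed slab, and ancient (defined for all $t<0$ since the approximating solutions exist on $(-k,0)$ with $k\to\infty$). Smoothness and strict convexity for each $t<0$ follow from the interior estimates. Finally one records that by construction it is not a shrinking circle (it lies in a slab), completing the proof of Theorem~\ref{main thm}. A cleaner alternative, if available in the literature for $\k^\a$ flow, is to construct the solution directly via separation of variables / eigenfunction perturbation off the shrinking solution as in Angenent's original oval, but the barrier-and-limit argument is the most robust and is what I would write up.
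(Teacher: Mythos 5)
Your overall strategy --- flow a sequence of large convex curves confined to a slab, normalize, and extract a subsequential limit --- is the same general scheme the paper follows, but you leave unresolved exactly the step on which the whole argument turns, and the paper's resolution of it is the one idea your proposal is missing. You take a generic ``stadium''-type initial curve and then observe that controlling the curvature from below uniformly is ``the main obstacle,'' to be handled by ``a lower barrier''; no such barrier is exhibited, and with arbitrary initial data none of the subsequent estimates get off the ground. The paper instead chooses the initial curve $\Gamma^R$ to be the doubling (by reflection in the $x$-axis) of the compact piece $G_{-R}\cap\{y\le 0\}$ of Urbas' translating soliton. Because the translator satisfies $\k^\a=-\langle\nu,e_2\rangle$ exactly, the initial curve satisfies $\k^\a\ge\pm\langle\nu,e_2\rangle$, and since $\k^\a\mp\langle\nu,e_2\rangle$ solves the Jacobi equation \eqref{Jaceqn}, the maximum principle propagates this lower curvature bound to all later times (Proposition \ref{lowerkbboundR}). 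Every uniform estimate then flows from this single inequality: the width bound $h_R(t)\ge \frac{w_\a}{2}-C_\a(-t)^{1-2\a}$ (which guarantees the limit fills a slab of width exactly $w_\a$ and does not degenerate to a segment or a point), the area and displacement bounds that show $T_R\to-\infty$, and, combined with Chow's differential Harnack inequality and the monotonicity $(\k^\a)_\theta\le 0$ on $(0,\pi/2)$ from Proposition \ref{k-monotone-t}, the uniform upper curvature bound $\k_R^\a(0,t)\le C_\a\left(1+\frac{1}{-t}\right)$. Your proposal contains none of these and offers no substitute.

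Two smaller points. First, your normalization (prescribing the width at $t=-1$) would still require you to prove that the normalized existence intervals exhaust $(-\infty,0)$; the paper gets $-T_R\ge R-C_\a(1+R^{\frac{2-3\a}{1-\a}})$ from an area computation that again rests on the translator-based initial data. Second, the upper curvature bound is not automatic: the paper derives it from the Harnack inequality together with the estimate $\ell_R(t)\le -C_\a t$. The alternative you mention at the end (perturbing off the shrinker as in Angenent's construction) is not what is done here and is not available for general $\a\in(1/2,1)$, where no explicit oval is known.
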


%
%
\begin{theorem}\label{thm2} 
For $\a\in(\frac23,1]$, any ancient compact convex solution to the $\k^\a$ flow must be the solution constructed in Theorem \ref{main thm} or the shrinking circle, modulo rigid motions and parabolic rescalings. 

For $\a\in (1/2, 1]$ any ancient non-compact convex solutions to the $\k^\a$ flow must be a translating solution.
\end{theorem}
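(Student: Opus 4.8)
We outline the argument. The plan is to reduce any convex ancient solution to one lying in a slab, and then to distinguish the non-compact (translator) case from the compact case by exploiting reflection symmetry and the behaviour as $t\to-\infty$.

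\emph{Reduction to the slab.} First one would establish, for the $\k^\a$ flow, the two facts used by X.J.~Wang in the curve-shortening case: the dichotomy that a convex ancient solution is either \emph{entire}, meaning $\bigcup_{t<0}\Omega_t=\R^2$, or has $\bigcup_{t<0}\Omega_t$ contained in a slab bounded by two parallel lines; and the fact that the only entire examples are the shrinking circles. These adapt to the $\k^\a$ flow using the corresponding monotone quantities (enclosed area and a scale-invariant entropy) together with a rescaling argument producing a self-shrinking backward limit, necessarily a circle; in the compact case the adaptation is cleaner since the degenerate line does not occur. Combined with the uniqueness of the translating soliton \cite{Urbas98}, it then remains to classify convex ancient solutions lying in a slab $S$. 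For these, applying Alexandrov's moving-plane method to the whole ancient family — comparing $\Gamma_t$ with its reflections across lines and using the avoidance principle as $t\to-\infty$ — shows that each $\Gamma_t$ is symmetric across the line orthogonal to $S$ bisecting it, and, when $\Gamma_t$ is compact, also across a line parallel to $S$. We normalize so that $S=\{\,|x_1|<w/2\,\}$ and each $\Gamma_t$ is symmetric across both coordinate axes.

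\emph{The non-compact case, $\a\in(\tfrac12,1]$.} A non-compact convex curve in $S$ symmetric across $\{x_1=0\}$ is, up to reflection, the graph $x_2=u(x_1,t)$ of a convex function on $(-w/2,w/2)$ with $u\to+\infty$ at both ends. Since the width stays bounded in time the solution is never self-shrinking, and one shows it must be a translator; the quickest route is \cite[Theorem 1.1]{CCD18}, but this can also be seen directly (the solution is then eternal and attains equality in a Harnack-type inequality for the $\k^\a$ flow). By \cite{Urbas98} the translator in $S$ is unique modulo rigid motions, which proves the second assertion.

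\emph{The compact case, $\a\in(\tfrac23,1]$.} After a parabolic rescaling take $w=w_\a$, the width of the slab containing the translator $T$. The decisive step is the backward asymptotics: as $t\to-\infty$, $\Gamma_t$ elongates in the $x_2$-direction while exhausting $S$, and, after translating so that the top tip $(0,\max_{\Gamma_t}x_2)$ sits at the origin, $\Gamma_t$ should converge to $T$ in $C^\infty_{\loc}$, and likewise at the bottom. This is precisely where $\a>\tfrac23$ is used: an ODE analysis of the translator profile near the walls of $S$ gives, for $\a<1$, the expansion $u(x)\sim c\,(w_\a/2-|x|)^{-(1-\a)/(2\a-1)}$ (with only logarithmic growth when $\a=1$), and the exponent $(1-\a)/(2\a-1)$ is less than $1$ exactly when $\a>\tfrac23$; this relatively mild growth is what makes the needed area and length bounds on $\Gamma_t$ valid and shows that the two tips decouple as $t\to-\infty$. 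Granting the asymptotics, uniqueness follows: writing the flow through the support function $h(\theta,t)$, which by the symmetry is determined on $\theta\in[0,\pi/2]$ and solves $h_t=-(h+h_{\theta\theta})^{-\a}$ with a boundary condition at the normal direction $e_1$ pinning $h$ to the slab, two ancient solutions with the same backward limit must coincide by a maximum-principle argument. Equivalently, one traps an arbitrary such solution between time-translates of the solution of Theorem \ref{main thm}, whose existence and matching backward asymptotics come from its construction, and concludes equality via the avoidance principle.

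\emph{Main obstacle.} The heart of the proof is the compact case: proving the sharp $t\to-\infty$ asymptotics with enough uniformity to control the geometry of $\Gamma_t$ and the interaction of its two tips, and then turning this into rigidity. The number $\tfrac23$ is exactly the threshold at which the exponent $(1-\a)/(2\a-1)$ governing the translator's blow-up at the walls of $S$ crosses $1$, and this is the obstruction to extending the compact classification to smaller $\a$.
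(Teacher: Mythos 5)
Your overall strategy is the one the paper follows: reduce via the Wang-type dichotomy to solutions lying in a slab (the paper simply cites Chen \cite{Chen15} for the dichotomy and for ``entire $\Rightarrow$ shrinking circle'', rather than re-deriving them as you propose), obtain reflection symmetry across the mid-line of the slab by Alexandrov reflection (Lemma \ref{lem:refl}), identify the backward limits of the tips with Urbas' translator (Proposition \ref{edges}), handle the non-compact case by comparing the forward and backward translator limits and invoking Harnack rigidity or \cite[Theorem 1.1]{CCD18} (this is exactly Theorem \ref{thm2II} and the paper's own remark), and settle the compact case by trapping an arbitrary solution between time-translates of the constructed one and applying the avoidance and strong maximum principles. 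Your identification of the threshold $\a>\tfrac23$ with the integrability of the translator's blow-up exponent $(1-\a)/(2\a-1)$ at the walls is precisely the mechanism behind the paper's estimate \eqref{ell-t}.

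Two points need repair. First, you assert that a compact timeslice is also reflection symmetric across a horizontal line. This cannot be obtained a priori by moving planes: as $t\to-\infty$ the curve becomes unbounded in the vertical direction, so there is no early time at which to initiate the reflection argument in that direction, and a priori the top and bottom tips could sit at heights $-t+c_+$ and $-(-t+c_-)$ with $c_+\neq c_-$. This symmetry only follows \emph{a posteriori} from uniqueness; the paper never uses it, and your second route (trapping by time-translates) does not need it, but your support-function formulation on $[0,\pi/2]$ does rely on it and should be dropped. Second, the sentence ``two ancient solutions with the same backward limit must coincide'' hides the main difficulty: by Proposition \ref{edges} \emph{every} solution in the slab has tips converging to the same translator, so the real content is matching the relative displacement of the two tips. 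Concretely, one must show that the excess height $\lim_{t\to-\infty}\bigl(L(t)+2t\bigr)$, where $L(t)$ is the vertical extent, is finite and equal to the corresponding limit for the constructed solution; establishing this (by a contradiction argument that would otherwise force one solution to strictly contain a translate of the other for all time, contradicting simultaneous extinction at the origin) is the heart of the paper's Theorem \ref{unique}, and it is exactly here that \eqref{ell-t}, hence $\a>\tfrac23$, is used. Your sketch gestures at this but does not supply the step.
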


Using the classification of translators by Urbas \cite{Urbas98}, we obtain the following corollary, which combined with the first part of Theorem \ref{thm2} provides a complete classification of ancient convex solutions to the $\k^\a$ flow for $\a\in (\frac23, 1]$.

\begin{corollary} 
For $\a\in(1/2,1]$, modulo rigid motions and parabolic rescalings, Urbas' translating solution over a strip \cite{Urbas98} and the straight line are the unique non-compact convex ancient solutions to the $\k^\a$ flow.
\end{corollary}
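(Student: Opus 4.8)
The plan is to obtain this corollary as a direct consequence of the second assertion of Theorem~\ref{thm2} together with Urbas' classification of convex translators \cite{Urbas98}; essentially all of the analytic content sits in those two results, so what remains is a short argument about the symmetry group generated by rigid motions and parabolic rescalings.

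First I would let $\{\Gamma_t\}$ be an arbitrary non-compact convex ancient solution of the $\k^\a$ flow with $\a\in(1/2,1]$. By the second part of Theorem~\ref{thm2} it is a translating solution: there is a fixed vector $v\in\R^2$ such that $\Gamma_t=\Gamma_0+tv$ for all $t$ (up to tangential reparametrization), where the profile curve $\Gamma_0$ solves the corresponding translator equation $\kappa^\a=\ang{-v}{\nu}$ along $\Gamma_0$ (with $\nu$ the outward unit normal, so $\ang{-v}{\nu}=\kappa^\a\ge 0$ everywhere).

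Next I would distinguish two cases according to whether the profile is flat. If $\kappa\equiv 0$ on $\Gamma_0$, then $\Gamma_0$ is a complete connected curve of vanishing curvature, i.e.\ a straight line, which is the first (static, hence trivially ancient) solution in the statement. Otherwise $\kappa>0$ somewhere, which forces $v\ne 0$; after a rotation we may assume $v=-\beta e_2$ for some $\beta>0$. A parabolic rescaling $x\mapsto\lambda x$, $t\mapsto\lambda^{\a+1}t$ sends the translating solution to the one with profile $\lambda\Gamma_0$ and velocity $\lambda^{-\a}v$, so choosing $\lambda=\beta^{1/\a}$ normalizes the speed to $1$. The rescaled profile is then a complete, properly embedded, non-flat convex translator for the $\k^\a$ flow with unit speed, so by Urbas' classification \cite{Urbas98} it coincides, after a translation, with Urbas' translator over a strip. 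Hence $\{\Gamma_t\}$ is, modulo a rigid motion and a parabolic rescaling, Urbas' translating solution. Conversely, the straight line and Urbas' translator are each non-compact convex (indeed eternal) ancient solutions, and no rigid motion or rescaling carries one to the other, which establishes the asserted classification.

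I do not expect a serious obstacle in this argument beyond correctly invoking the two inputs. The only point requiring care is purely organizational: one must check that the one-parameter freedom among the nontrivial convex translators — the direction and magnitude of $v$, equivalently the width of the strip — is exactly consumed by the group generated by rigid motions and parabolic rescalings. The computation above shows that a parabolic rescaling by $\lambda$ multiplies the profile by $\lambda$ and the velocity by $\lambda^{-\a}$, so a single rescaling removes the residual scale parameter and leaves Urbas' normalized translator as the unique representative.
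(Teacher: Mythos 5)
Your proposal is correct and follows exactly the route the paper intends: the paper presents this corollary as an immediate consequence of the second assertion of Theorem \ref{thm2} combined with Urbas' classification, and your write-up simply makes explicit the normalization of the translation velocity by a rotation and a parabolic rescaling (your scaling computation, velocity $\mapsto\lambda^{-\a}v$ under $x\mapsto\lambda x$, $t\mapsto\lambda^{1+\a}t$, is right). No discrepancy with the paper's argument.
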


The dichotomy theorem of X.J. Wang has been extended to the $\k^\a$ flow, $\a\in (1/2, 1]$, by S. Chen \cite{Chen15}, who also showed that if the solution sweeps the whole $\R^2$ then it must be the shrinking circle. We use this result to reduce our analysis to solutions that lie in a slab. 

For $\a\in (1/2, 1)$, it does not seem possible to find an explicit solution as the Angenent oval. But for this range of $\a$'s,  it is known that, apart from the stationary line, and modulo rigid motions and parabolic translations, there is a unique translating solution to the $\k^\a$ flow and, in fact, this solution lies in a slab \cite{Urbas98}. This translating solution is essential in our construction of the solution described in Theorem \ref{main thm}

Our proof follows the ideas in \cite{BLT3}. We construct an ancient solution that lies in a strip by doubling (via reflection) compact pieces of the translating solution flowing them by the $\k^\a$ flow and taking a limit as the compact pieces become larger and larger. By analyzing the asymptotics of this solution, we are able to use Alexandrov reflection principle to show first that any such solution is reflection symmetric with respect to the mid-plane of the slab and second, for $\a\in (\frac23,1]$, that the solution is unique. Many of the techniques used in the construction as well as in the proof of uniqueness of this solution, have been used in \cite{BLT1, BLT3}. A major difficulty encountered for $\a<1$ is that the derivative of the enclosed area is no longer constant, which was a crucial ingredient in the previous works. 

The fact that we can prove uniqueness only for $\a\in (\frac23,1]$ comes from the better asymptotics we get in this range, which is due to the following reason.  
Urbas' unique convex translating solution is a graph over some bounded interval $I$, which, after a rotation, can be taken to lie on the $x$-axis. If we consider a timeslice that is contained in $I\times (0, +\infty)$ and let $R$ be the non-convex region of $I\times(0, +\infty)$ delimited by this timeslice, then the area of $R$ is finite if an only if $\a\in (\frac23, 1]$.  This area estimate allows us to obtain good enough asymptotics of the constructed solution in order to apply Alexandrov reflection and the maximum principle to prove uniqueness. We remark here that the same issue appears in the recent work of B. Choi, K. Choi and Daskalopoulos \cite{CCD20}, where they construct ancient solutions to the Gauss curvature flow under an initial finiteness of volume assumption. \\

The paper is structured as follows: In Section \ref{sec:AncientSol}, we gather facts about convex ancient solutions to the $\k^\a$ flow; In Section \ref{sec:Construction}, we construct families of old but not ancient solutions and show they satisfy estimates similar to the ones in Section \ref{sec:AncientSol} so that a subsequence of them converges to an ancient solution; Finally, in Section \ref{sec:Uniqueness}, we prove the  classification Theorem \ref{thm2}. 

\subsection*{Acknowledgements} This research originated at  the  workshop ``Women in Geometry 2" at the  Casa Matem\'atica Oaxaca (CMO)  from June 23 to June 28, 2019.  We would like to thank CMO-BIRS for creating the opportunity to start work on this problem through their support of the workshop. Theodora Bourni thanks Mat Langford and Toti Daskalopoulos for useful discussions on the subject.

\section{Ancient solutions of the $\k^\a$-flow}
\label{sec:AncientSol}

Let  $\{\Gamma_t\}_{t\in (-\infty, 0)}\subset \R^2$ be a convex ancient solution to the $\k^\a$-flow and consider its parametrization by its turning angle
\[
\gamma:\Theta\times(-\infty,0)\to \R^2, \quad \Theta\subset[0, 2\pi)\,.
\]
The turning angle $\theta$ of the solution is the angle made by the $x$-axis and its tangent vector with respect to a counterclockwise parametrization. 
Using spacetime translation and a space rotation if necessary, we assume that $\lim_{t\to 0}\Gamma_t\subset \{y\ge 0\}$ and that, at time $0$, the solution vanishes in the compact case whereas in the non-compact case it passes through the origin.  

Let  $\nu=\nu(\theta, t)$ and $\k(\theta, t)$ be the outward pointing unit normal and curvature of $\Gamma_t$ at $\gamma(\theta, t)$, respectively. Then
\[
\nu(\theta, t)=(\sin\th, -\cos\th)\,,
\]
and the evolution of $\k$ is given by 
\begin{equation}\label{kt}
\k_t=\k^2(\k^\a)_{\th\th}+\k^{\a+2}\,.
\end{equation}

A very useful feature of the $\k^\a$ flow is that it satisfies a differential Harnack inequality, a consequence of which is that the curvature on an ancient solution is non decreasing \cite{BA94, Chow91}:
\begin{equation}\label{Har}
\k_t(\theta, t)\ge 0\,.
\end{equation}
This holds for non-compact solutions as well, since the curvature is bounded at all timeslices. Moreover, it is know that the inequality is strict unless the solution moves by translation. As a corollary we obtain that the ``ends'' of an ancient solution are translators in the following sense. For any $\theta\in \Theta$ and any sequence of times $t_i\to -\infty$ the sequence of flows
$\Gamma_t^i=\Gamma_{t+t_i}-\gamma(\theta, t_i)$
converges, after passing to a subsequence,  locally uniformly in the smooth topology to a convex translating solution to the $\k^\a$ flow. For a proof of this standard compactness argument see for instance \cite[Lemma 5.2]{BLT1}. In fact, the curvature $\k^T(\theta, t)$ of the limiting translator satisfies 
\[
\k^T(\theta, t)=\lim_{j\to \infty}\k(\theta, t_{i_j}).
\]

\subsection*{The convex translating solution}
According to the theorem of Urbas stated below \cite{Urbas98} (see also \cite[Proposition 2.4]{CCD18}),  modulo translations, there is a unique convex translating solution of the $\k^\a$-flow moving with speed $1$ along the $e_2$ direction, for $\a\in(1/2,1]$, other than the straight line. We thus have that the limit $\Gamma^s_t=\Gamma_{t+s}-\gamma(\theta, s)$, as $s\to -\infty$, exists.

\begin{theorem}\label{Urbas}\cite{Urbas98}
Modulo translations, there exists a unique strictly convex curve $G_0$ that satisfies 
\[
\k^\a=-\langle \nu, e_2\rangle\,,\,\,\a\in (1/2,1]\,,
\]
where $\k$ and $\nu$ are the curvature and downward pointing unit normal to $G_0$. Moreover $G_0$ is a graph over a strip of width
\[
w_\a:=\int_{-\infty}^{+\infty}\frac{1}{(1+y^2)^{\frac12(3-\frac1\a)}}\, dy \,.
\]
Modulo translations, $G_t= G_0 +te_2$ is then the unique convex translating solution that moves with speed 1 along the $e_2$ direction.
\end{theorem}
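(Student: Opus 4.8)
The plan is to convert the translator equation into an ODE in the turning-angle parametrization and integrate it explicitly. If $\Gamma_t=\Gamma_0+te_2$, then the normal speed of this family at a point with (downward) unit normal $\nu$ is $\langle e_2,\nu\rangle$, so the family solves the $\k^\a$ flow with unit speed along $e_2$ precisely when $\langle e_2,\nu\rangle=-\k^\a$; with $\nu=(\sin\th,-\cos\th)$ this is exactly the stated equation $\k^\a=\cos\th=-\langle\nu,e_2\rangle$. Thus a strictly convex translator is nothing but a strictly convex curve whose curvature, as a function of its turning angle, satisfies
\[
\k(\th)=(\cos\th)^{1/\a}.
\]
Since the right-hand side is positive, $\cos\th>0$ along such a curve, so (its turning angle being monotone) $\th$ stays inside $(-\pi/2,\pi/2)$ and the curve is non-compact.

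For existence I would recover the curve from its curvature. On $(-\pi/2,\pi/2)$ set
\[
x(\th)=\int_0^\th(\cos\phi)^{1-\frac1\a}\,d\phi,\qquad y(\th)=\int_0^\th\sin\phi\,(\cos\phi)^{-\frac1\a}\,d\phi ,
\]
which is the recovery formula $\tfrac{d}{d\th}(x,y)=\tfrac1\k(\cos\th,\sin\th)$. The hypothesis $\a\in(1/2,1]$ enters here transparently and twice: $(\cos\phi)^{1-1/\a}$ is integrable on $(-\pi/2,\pi/2)$ iff $\a>1/2$, while $(\cos\phi)^{-1/\a}$ is \emph{not} integrable there iff $\a\le1$. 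So for $\a\in(1/2,1]$ the map $\th\mapsto x(\th)$ is a bounded strictly increasing bijection onto a strip of width $\int_{-\pi/2}^{\pi/2}(\cos\phi)^{1-1/\a}\,d\phi$, which the substitution $y=\tan\phi$ identifies with $w_\a$, while $y$ and the arc length both tend to $+\infty$ as $\th\to\pm\pi/2$, so the curve is complete. Since $dy/dx=\tan\th$ is increasing, this curve is the graph of a convex function over that strip, escaping to $+\infty$ at both ends, and by construction its curvature equals $(\cos\th)^{1/\a}$; so it is a translator, and after a translation it is the desired $G_0$.

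For uniqueness, let $\tilde G_0$ be any strictly convex translator, equivalently any solution of $\k^\a=-\langle\nu,e_2\rangle$. Reparametrized by turning angle it satisfies the same ODE $\k(\th)=(\cos\th)^{1/\a}$, hence after a translation it agrees with $G_0$ on the interval of turning angles it attains, and it only remains to see this interval is all of $(-\pi/2,\pi/2)$. This is the one point needing care: if $\th$ stayed in a closed subinterval $[\th_-,\th_+]\subsetneq(-\pi/2,\pi/2)$, then $1/\k=(\cos\th)^{-1/\a}$ would be bounded there, so $\tilde G_0$ would have finite length and thus a genuine endpoint, contradicting that a translating solution of the flow is complete (properly embedded). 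Hence $\th_\pm=\pm\pi/2$ and $\tilde G_0=G_0$ modulo translation, and correspondingly $G_t=G_0+te_2$ is the unique unit-speed convex translator along $e_2$, since its time-zero slice must be such a $G_0$.

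I expect the only real obstacle to be this last piece of bookkeeping — ensuring that the notion of ``strictly convex translating solution'' carries enough completeness to force the full turning-angle range $(-\pi/2,\pi/2)$; everything else is the explicit integration above, in which the two endpoints of $(1/2,1]$ play the roles of the two integrability thresholds (finite strip width at $\a=1/2$, loss of completeness at $\a=1$).
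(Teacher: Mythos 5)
The paper does not prove this statement at all: it is quoted verbatim as a known result of Urbas \cite{Urbas98} (see also the reference to \cite{CCD18}), so there is no internal proof to compare against. Your self-contained argument is correct and is the standard way to establish the planar case. The sign bookkeeping checks out: with $\nu=(\sin\th,-\cos\th)$ and the flow $\partial_t\gamma=-\k^\a\nu$, a translator with velocity $e_2$ satisfies $\k^\a=-\langle\nu,e_2\rangle=\cos\th$, so $\k(\th)=(\cos\th)^{1/\a}$, and the recovery formula $\gamma'(\th)=\k^{-1}(\cos\th,\sin\th)$ gives exactly the integrals you wrote; the substitution $y=\tan\phi$ turns $\int_{-\pi/2}^{\pi/2}(\cos\phi)^{1-1/\a}d\phi$ into the paper's $w_\a$ (this identity is in fact used later in the paper as equation \eqref{wath}). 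Your two integrability observations ($\a>1/2$ gives finite width, $\a\le1$ gives $y\to+\infty$ and infinite arc length, hence completeness and convex graphicality over the open strip) are the right ones, and your uniqueness step correctly isolates the only nontrivial point, namely that completeness of $G_0$ forces the turning angle to exhaust $(-\pi/2,\pi/2)$: a proper subinterval would give bounded $1/\k$ near the missing end and hence a finite-length arc with an endpoint. Two very minor remarks: your closing parenthetical is slightly loose (at $\a=1/2$ the width is already infinite, and at $\a=1$ completeness still holds --- these are the thresholds at which the behavior changes, not values at which the construction works differently from the interior of $(1/2,1]$); and the theorem's final sentence speaks of ``convex'' rather than ``strictly convex'' translators, so strictly speaking one should also dispose of non-strictly-convex candidates (a flat segment forces $\cos\th=0$, i.e.\ a vertical segment, leading only to the stationary line, which the surrounding text of the paper explicitly sets aside). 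Neither affects the validity of your argument.
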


\subsection*{Estimates for compact convex ancient solutions}
From now on, $\a\in(1/2,1)$ and $\{\Gamma_t\}_{t \in (-\infty, 0)}$ will denote a compact convex ancient solution to the $\k^\a$ flow which lies in the strip $\left[-\frac{w_\a}{2}\frac{w_\a}{2}\right]\times\R$ and in no smaller strip, where $w_\a$ is as in Theorem \ref{Urbas}. Using the parametrization by turning angle, we have
\[
\gamma:(-\pi, \pi]\times (-\infty, 0)\to \left(-\frac{w_\a}{2}, \frac{w_\a}{2}\right)\times\R\,.
\]
Since the solution lies in no smaller strip, it is not difficult to see that $\cup_{t\in (-\infty, 0)} \Gamma_t=(-\frac{w_\a}{2}, \frac{w_\a}{2})\times \R$, see for instance \cite[Lemma~5.1]{BLT1}. 
We will use the differential Harnack inequality, to show bounds on the curvature and width of our convex ancient solutions.

\begin{proposition}\label{lowerkbbound}
For any $\th\in(-\pi, \pi]$ and $t\in (-\infty, 0)$
\[
\k^\a(\th, t)\ge \pm \langle \nu(\th, t), e_2\rangle.
\]
\end{proposition}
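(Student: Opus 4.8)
The plan is to combine the differential Harnack inequality \eqref{Har} with Urbas' classification of convex translators (Theorem~\ref{Urbas}) through the backward-limit construction recalled above. Since $\nu(\th,t)=(\sin\th,-\cos\th)$, we have $\langle\nu(\th,t),e_2\rangle=-\cos\th$ independently of $t$, so the asserted inequality is equivalent to $\k^\a(\th,t)\ge|\cos\th|$. By \eqref{Har} the function $t\mapsto\k(\th,t)$ is non-decreasing, hence $\k(\th,t)\ge\k^T(\th):=\lim_{s\to-\infty}\k(\th,s)$ for every $t$; and since a bounded monotone function has the same limit along every sequence, the compactness statement above identifies $\k^T(\th)$ with the curvature at turning angle $\th$ of the backward-limit translator $\Gamma^T_\th:=\lim_{s\to-\infty}\bigl(\Gamma_s-\gamma(\th,s)\bigr)$. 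It therefore suffices to prove $\k^T(\th)^\a\ge|\cos\th|$, i.e.\ to bound below the curvature of $\Gamma^T_\th$ at its basepoint.

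To this end I would identify $\Gamma^T_\th$ via Theorem~\ref{Urbas}. Each curve $\Gamma_s-\gamma(\th,s)$ has the same horizontal width as $\Gamma_s$, hence at most $w_\a$; passing to the limit, $\Gamma^T_\th$ is a convex translator contained in a vertical strip of width $\le w_\a$. If $\th\ne\pm\tfrac\pi2$, then $\Gamma^T_\th$ cannot be a straight line, since a line through the basepoint has tangent direction $(\cos\th,\sin\th)$ and therefore infinite horizontal extent; so by Theorem~\ref{Urbas}, $\Gamma^T_\th$ is obtained from $G_0$ by a rigid motion together with a parabolic rescaling. Containment in a vertical strip of finite width forces the direction of translation to be $\pm e_2$ (these are the only asymptotic directions of $G_0$), say with speed $\s$; such a translator has width $\s^{-1/\a}w_\a$, and $\s^{-1/\a}w_\a\le w_\a$ yields $\s\ge1$. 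Evaluating the translator equation $\langle V,\nu\rangle=-\k^\a$, with translation velocity $V=\pm\s\,e_2$, at the basepoint (where $\nu=(\sin\th,-\cos\th)$) gives $\k^T(\th)^\a=\s\,|\langle\nu,e_2\rangle|=\s\,|\cos\th|\ge|\cos\th|$. For $\th=\pm\tfrac\pi2$ one has $\cos\th=0$ and $\k^\a(\th,t)\ge0$ is immediate. In either case $\k^\a(\th,t)\ge\k^T(\th)^\a\ge|\cos\th|=|\langle\nu(\th,t),e_2\rangle|\ge\pm\langle\nu(\th,t),e_2\rangle$, as claimed.

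The step I expect to be the main obstacle is the identification of $\Gamma^T_\th$ in the second paragraph: one must rule out the degenerate straight-line limit and pin down the direction and speed of the limiting translator. This is precisely where the standing assumption that $\{\Gamma_t\}$ lies in the strip of width $w_\a$ enters, since it bounds the width of $\Gamma^T_\th$ by $w_\a$, leaving only the copies of $G_0$ of width at most $w_\a$ and forcing $\s\ge1$.
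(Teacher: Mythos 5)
Your argument is correct, and it differs from the paper's proof in the step that transports the inequality from $t=-\infty$ to finite times. Both proofs rest on the same foundation: the backward limit $\lim_{s\to-\infty}\bigl(\Gamma_s-\gamma(\th,s)\bigr)$ is a convex translator confined to a vertical strip of width at most $w_\a$, which by Theorem \ref{Urbas} forces it to be a rigid image of $\s^{-1/\a}G_0$ translating in the $\pm e_2$ direction with speed $\s\ge1$, so that $\k^\a\ge|\langle\nu,e_2\rangle|$ holds on the limit; the paper asserts this identification in one line, and your second paragraph supplies the details (ruling out the line, pinning down the direction and the speed) that the paper leaves implicit. Where you genuinely diverge is in the propagation: the paper observes that $u=\k^\a\mp\langle\nu,e_2\rangle$ satisfies the Jacobi equation \eqref{Jaceqn} and invokes the parabolic maximum principle with the translator limit serving as the ``initial condition at $t=-\infty$,'' whereas you exploit the fact that in the turning-angle gauge $\langle\nu(\th,t),e_2\rangle=-\cos\th$ is independent of $t$, so the Harnack monotonicity $\k_t\ge0$ of \eqref{Har} alone gives $\k(\th,t)\ge\k^T(\th)$ pointwise in $\th$, with no PDE argument needed. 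Your route is more elementary at this step (a monotone scalar limit replaces the maximum principle), at the mild cost of needing the full quantitative identification of the limiting translator, including its speed; since the paper's verification of $u\ge0$ on the limit requires essentially the same appeal to Urbas, the two proofs are of comparable length and both are sound.
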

\begin{proof}
We note first that the function $\k^\a\mp\langle\nu, e_2\rangle$ satisfies the Jacobi equation for the $\k^\a$ flow: 
\begin{equation}\label{Jaceqn}
u_t=\a\k^{\a+1}(u_{\th\th}+u)\,.
\end{equation}
The inequality is trivially true for $\theta=\pm \frac{\pi}{2}$. Recall that the flows $\Gamma^s_t=\Gamma_{t+s}-\gamma(\theta, s)$ converge as $s\to -\infty$ to a convex translator which lies in a strip of width $w_\a$, which by Theorem \ref{Urbas}, satisfies $\k^\a\mp\langle\nu, e_2\rangle\ge 0$.
The result then follows by the maximum principle.
\end{proof}

\begin{proposition}\label{X-est}
For any $t\in (-\infty, 0)$ and  any $ \th\in \pm[0, \pi]$

\[
\pm \left(\langle\gamma(\pm \tfrac\pi2, t), e_1\rangle-\langle\gamma(\theta, t),e_1\rangle\right)\le \pm \int_\th^{\pm\frac\pi2}\frac{\cos u}{|\cos u|^\frac{1}{\a}}\,du \,.
\]

\end{proposition}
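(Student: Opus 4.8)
The plan is to integrate the geometric identity $\frac{d}{d\theta}\langle\gamma(\theta,t),e_1\rangle = \frac{\cos\theta}{\kappa(\theta,t)}$ along the curve and then insert the curvature lower bound from Proposition~\ref{lowerkbbound}. Recall that under the turning-angle parametrization the tangent vector is $(\cos\theta,\sin\theta)$ and $\gamma_\theta = \tau/\kappa$, so $\langle\gamma_\theta,e_1\rangle = \cos\theta/\kappa$. Hence for $\theta\in[0,\pi]$ we have the exact formula
\[
\langle\gamma(\tfrac\pi2,t),e_1\rangle - \langle\gamma(\theta,t),e_1\rangle = \int_\theta^{\pi/2}\frac{\cos u}{\kappa(u,t)}\,du,
\]
and similarly with the signs reversed for $\theta\in[-\pi,0]$; this reduces the proposition to a pointwise estimate on the integrand.

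Next I would split the integral at $u = \pm\pi/2$ according to the sign of $\cos u$. On the portion where $\cos u > 0$ (i.e. between $\theta$ and $\pi/2$ for $\theta\in[0,\pi/2)$, and likewise on the mirror side), the factor $\cos u/\kappa$ is nonnegative, and Proposition~\ref{lowerkbbound} with $\langle\nu(u,t),e_2\rangle = -\cos u$ gives $\kappa^\alpha(u,t) \geq |\langle\nu,e_2\rangle| = |\cos u|$, hence $\kappa(u,t)\geq|\cos u|^{1/\alpha}$, so $\cos u/\kappa(u,t) \leq \cos u/|\cos u|^{1/\alpha}$. On the portion where $\cos u < 0$ (the range $u\in(\pi/2,\pi]$ when $\theta>\pi/2$), the integrand $\cos u/\kappa$ is negative, whereas the claimed bound $\cos u/|\cos u|^{1/\alpha}$ is also negative but \emph{smaller in absolute value is not automatic}; here one again uses $\kappa\geq|\cos u|^{1/\alpha}$, which for a negative numerator reverses to $\cos u/\kappa(u,t) \leq \cos u/|\cos u|^{1/\alpha}$ as well, since dividing a fixed negative number by a larger positive denominator increases it. So in all cases the integrand satisfies $\cos u/\kappa(u,t)\leq \cos u/|\cos u|^{1/\alpha}$, and integrating over $[\theta,\pi/2]$ (resp. $[-\pi/2,\theta]$ with the outer sign) yields the stated inequality. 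The factor of $\pm$ in front of the whole expression and on $\int_\theta^{\pm\pi/2}$ is exactly bookkeeping for the two cases $\theta\in[0,\pi]$ and $\theta\in[-\pi,0]$.

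The one genuine subtlety—the step I expect to require the most care—is the integrability and finiteness of $\int\cos u/|\cos u|^{1/\alpha}\,du$ near $u=\pm\pi/2$, since the exponent $1/\alpha\in(1,2)$ makes $|\cos u|^{1-1/\alpha}$ blow up there; but $1 - 1/\alpha > -1$ precisely because $\alpha > 1/2$, so the integral converges, and this is exactly where the hypothesis $\alpha\in(1/2,1)$ enters. A secondary point worth a sentence is justifying $\gamma_\theta = \tau/\kappa$: this holds because the solution is strictly convex and smooth (immediately, by \cite{BA98}), so $\kappa>0$ and the turning-angle parametrization is legitimate with $ds/d\theta = 1/\kappa$. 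With those two observations in place the argument is just the chain of elementary inequalities above followed by integration.
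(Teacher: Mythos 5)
Your approach is exactly the paper's: integrate the turning-angle identity $\gamma(\theta_1)-\gamma(\theta_0)=\bigl(\int_{\theta_0}^{\theta_1}\cos u/\kappa\,du,\ \int_{\theta_0}^{\theta_1}\sin u/\kappa\,du\bigr)$ (this is \eqref{eq:DiffPosition} in the paper) and insert the bound $\kappa\ge|\cos u|^{1/\alpha}$ coming from Proposition~\ref{lowerkbbound} together with $-\langle\nu,e_2\rangle=\cos\theta$. The paper writes out only the case $\theta\in[0,\pi/2]$ and declares the remaining cases similar; your observations about integrability of $|\cos u|^{1-1/\alpha}$ near $u=\pm\pi/2$ (which indeed uses $\alpha>1/2$) and about the validity of the turning-angle parametrization are correct and harmless additions.

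However, your handling of the region where $\cos u<0$ is internally inconsistent and, taken literally, would prove the reverse inequality. When $\cos u<0$, the bound $\kappa\ge|\cos u|^{1/\alpha}$ gives $\cos u/\kappa\ \ge\ \cos u/|\cos u|^{1/\alpha}$: dividing a fixed negative number by a larger positive denominator \emph{increases} it, exactly as you say in your own sentence, so the pointwise inequality is $\ge$ there, not the $\le$ you assert. If the pointwise inequality really were $\le$ on all of $(\pi/2,\theta)$, then for $\theta\in(\pi/2,\pi]$ the oriented integral $\int_\theta^{\pi/2}=-\int_{\pi/2}^{\theta}$ would reverse it and yield $\ge$ in the conclusion, which is not what is claimed. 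The correct bookkeeping is that on $(\pi/2,\theta)$ one has the pointwise bound $\cos u/\kappa\ge\cos u/|\cos u|^{1/\alpha}$, and the orientation reversal in $\int_\theta^{\pi/2}=-\int_{\pi/2}^{\theta}$ flips it back to the claimed $\le$; equivalently, write $\int_\theta^{\pi/2}\cos u/\kappa\,du=\int_{\pi/2}^{\theta}|\cos u|/\kappa\,du$ and estimate the now-nonnegative integrand by $|\cos u|^{1-1/\alpha}$. Your two sign slips cancel, so the final statement is true, but the justification of this case needs to be rewritten along these lines.
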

\begin{proof} Recall that a convex curve $\gamma(\th)$ with curvature $\k(\theta)$ 
satisfies
\begin{equation}
\label{eq:DiffPosition}
  \gamma(\th_1) - \gamma(\th_0)=\left(\int_{\th_0}^{\th_1}\frac{\cos u}{\k(u)}du\,,\,\,\int_{\th_0}^{\th_1}\frac{\sin u}{\k(u)}du\right)\,.
\end{equation}
For any $\th\in[0, \frac{\pi}{2}]\cap \Theta$ we compute, using Proposition \ref{lowerkbbound} and the fact that $-\langle\nu,e_2\rangle=\cos \th$,
\[
\langle\gamma(\tfrac\pi2, t), e_1\rangle-\langle\gamma(\theta, t),e_1\rangle=\int_\th^{\pi/2}\frac{\cos u}{\k(u,t)}du\le \int_\th^{\pi/2}\frac{\cos u}{|\cos u|^{\frac1\alpha}}\,du\,.
\]
The other cases are proved similarly.
\end{proof}
As a consequence of this width estimate and our condition that the ancient solution $\Gamma_{t}$ lies in no strip smaller than $\left[-\frac{w_\a}{2}, \frac{w_\a}{2}\right]\times\R$,  we can determine scale of the limiting translating solution:
\begin{proposition}\label{edges}
For any $\theta\in (-\pi, \pi]\setminus\{\frac\pi2, -\frac{\pi}{2}\}$, the sequence of flows 
$\Gamma^s_t=\Gamma_{t+s}-\gamma(\theta, s)$
converges to a translating solution, which after a time and space translation, as well as a reflection about the $x$-axis in case $\theta\in (-\pi, \pi]\setminus(-\tfrac{\pi}{2}, \tfrac{\pi}{2})$, is given by $\{G_t\}_{t\in(-\infty, \infty)}$, as defined in Theorem~\ref{Urbas}.
\end{proposition}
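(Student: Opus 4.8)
The plan is to combine the ``ends are translators'' compactness statement recalled above with Propositions \ref{lowerkbbound} and \ref{X-est}; the only genuinely new point is to pin down the \emph{scale} of the limiting translator. Fix $\theta\in(-\pi,\pi]\setminus\{\pm\tfrac{\pi}{2}\}$, and reflect the solution about the $x$-axis if $\theta\notin(-\tfrac{\pi}{2},\tfrac{\pi}{2})$, so we may assume $\theta\in(-\tfrac{\pi}{2},\tfrac{\pi}{2})$; the relevant end is then the bottom portion of $\Gamma_t$ near turning angle $0$. As recalled before the statement, the Harnack inequality \eqref{Har} forces $\kappa(\cdot,s)$ to decrease pointwise as $s\to-\infty$, so the recentred flows $\Gamma^s_t=\Gamma_{t+s}-\gamma(\theta,s)$ converge to a convex translator $T$ with curvature $\kappa^T(u)=\lim_{s\to-\infty}\kappa(u,s)$. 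By Proposition \ref{lowerkbbound}, $\kappa^T(\theta)^\a\ge|\cos\theta|>0$, so $T$ is not a straight line. Moreover each $\Gamma^s_t$ lies in a fixed-width vertical strip (a horizontal translate of the original one), and these strips converge as $s\to-\infty$, so the whole limiting flow is confined to a single vertical strip; a non-stationary convex translator confined to a bounded-width vertical strip must translate in the $\pm e_2$ direction, and since the bottom point of a convex curve moves upward under the flow, $T$ opens upward and moves upward. By Theorem \ref{Urbas} together with the parabolic scaling invariance of the $\k^\a$ flow, $T$ therefore coincides, modulo a space-time translation, with the speed-$c$ parabolic rescaling $G^{(c)}_t$ of Urbas' translator for some $c>0$, which has $\mathrm{width}(T)=c^{-1/\a}w_\a$; it thus remains only to prove $\mathrm{width}(T)=w_\a$, i.e.\ $c=1$.

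To identify $\mathrm{width}(T)$ I would first show $\mathrm{width}(\Gamma_s)\to w_\a$ as $s\to-\infty$. Differentiating along the flow gives $\partial_s\langle\gamma(\pm\tfrac{\pi}{2},s),e_1\rangle=\mp\,\kappa^\a(\pm\tfrac{\pi}{2},s)$, so the rightmost and leftmost $x$-coordinates of $\Gamma_s$ are monotone in $s$ and converge; since $\bigcup_s\Gamma_s=(-\tfrac{w_\a}{2},\tfrac{w_\a}{2})\times\R$ (recalled above), these limits are forced to equal $\pm\tfrac{w_\a}{2}$, so by \eqref{eq:DiffPosition}, $\mathrm{width}(\Gamma_s)=\int_{-\pi/2}^{\pi/2}\frac{\cos u}{\kappa(u,s)}\,du\to w_\a$. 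Now I pass to the limit inside the integral: $\kappa(u,s)\downarrow\kappa^T(u)$ by \eqref{Har}, while Proposition \ref{lowerkbbound} bounds the integrand by $(\cos u)^{1-1/\a}$, which is integrable on $(-\tfrac{\pi}{2},\tfrac{\pi}{2})$ precisely because $\a>\tfrac12$; dominated convergence then yields $\int_{-\pi/2}^{\pi/2}\frac{\cos u}{\kappa^T(u)}\,du=w_\a$. But the left-hand side is exactly $\mathrm{width}(T)$ by \eqref{eq:DiffPosition}, so $c=1$ and $T=G_t$ modulo a space-time translation. For $\theta\in(-\pi,\pi]\setminus(-\tfrac{\pi}{2},\tfrac{\pi}{2})$, running the same argument on the reflected solution identifies the limit with $G_t$ reflected about the $x$-axis; the excluded values $\theta=\pm\tfrac{\pi}{2}$ are exactly where the recentred flows need not converge to a full translator.

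I expect the main obstacle to be precisely the identification $\mathrm{width}(T)=\lim_{s\to-\infty}\mathrm{width}(\Gamma_s)$. Local smooth convergence of the $\Gamma^s_t$ does not a priori control the lateral extreme points of $\Gamma_{t+s}$ after recentring at the $\theta$-point --- their heights may escape to $\pm\infty$ --- so one cannot read the width of $T$ directly off the curves; routing the computation through the curvature integral of \eqref{eq:DiffPosition} together with a dominated-convergence argument that genuinely uses $\a>\tfrac12$ is what makes the scale argument go through, and it is the step most sensitive to the hypotheses.
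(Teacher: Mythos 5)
Your proof is correct and follows essentially the same route as the paper: both identify the scale of the limiting translator by combining the fact that the width of the timeslices tends to $w_\alpha$ (since the solution sweeps out the full strip and the extremal $x$-coordinates are monotone in $t$) with uniform tail control near $\theta=\pm\tfrac{\pi}{2}$ coming from $\kappa^\alpha\ge|\cos\theta|$ and $\alpha>\tfrac12$. The paper packages that tail control as the displacement estimate of Proposition \ref{X-est} applied at angles close to $\pm\tfrac{\pi}{2}$, while you package it as dominated (monotone) convergence of $\int_{-\pi/2}^{\pi/2}\cos u/\kappa(u,s)\,du$ with majorant $(\cos u)^{1-1/\alpha}$ --- the same estimate in integral form.
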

\begin{proof} Given $\theta\in (-\pi, \pi]\setminus\{-\frac\pi2, \frac\pi2\}$, we have already seen that the sequence of flows 
$\Gamma^s_t=\Gamma_{t+s}-\gamma(\theta, s)$, as $s\to -\infty$, converges to a translator, which by Theorem \ref{Urbas}, and after a time and space translation and possibly a reflection about the $x$-axis, is given by $\{\lambda G_t\}_{t\in (-\infty, \infty)}$, with the scale satisfying $\lambda\le 1$ since the translator must be contained in a slab of width $w_\a$ (and it might be contained in a smaller one). We want to show here that the scale satisfies $\lambda =1$.

Let us assume that $\theta\in[0, \tfrac\pi2)$ (the other cases being treated similarly). Since the solution $\{\Gamma_t\}_{t\in (-\infty, 0)}$ is contained in a slab of width $w_\a$ and in no smaller slab, for any $\e>0$ there exists $t_\e<0$ such that 
\begin{equation}\label{widthest}
 \tfrac{w_\a}{2}-\langle\gamma(\tfrac\pi2, t) , e_1\rangle \le \e \,,\,\,\forall t\le t_\e\,.
\end{equation}
Note that, with the change of variable $y = \tan \theta$, 
\begin{equation}\label{wath}
\frac{w_\a}{2}=\int_{0}^{+\infty}\frac{1}{(1+y^2)^{\frac12(3-\frac1\a)}}\,dy =\int_0^\frac\pi2\cos\th^{1-\frac1\a}d\th\,,
\end{equation}
and thus we can choose $\theta\in (0, \tfrac\pi2)$ so that$\int_\th^\frac\pi2\cos\th^{1-\frac1\a}d\th<\e$. Then, by Proposition \ref{X-est} and \eqref{widthest}
\[
\frac{w_\a}{2}- \langle\gamma(\th, t) , e_1\rangle\le  \langle\gamma(\tfrac{\pi}{2}, t) , e_1\rangle- \langle\gamma(\th, t) , e_1\rangle+\e\le 2\e\,.
\]
Hence the limit translator $\{\lambda G_t\}_{t\in (-\infty, \infty)}$ cannot be contained in a slab smaller that $w_\a-4\e$, and since $\e$ was arbitrary we have $\lambda=1$ which yields the result.
\end{proof}

\begin{proposition}\label{area} Let $A(t)$ be the area of the region enclosed by $\Gamma_t$. Then
\[
A(t)\le 2w_\a(-t)\,.
\]
\end{proposition}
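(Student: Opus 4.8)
The plan is to differentiate the enclosed area in time and feed in the curvature bound of Proposition~\ref{lowerkbbound}.

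First I would recall the standard formula for the evolution of the enclosed area under the $\k^\a$ flow, namely
\[
\frac{dA}{dt}=-\int_{\Gamma_t}\k^\a\,ds,
\]
and rewrite it in the turning-angle parametrization. Since $ds=\k^{-1}\,d\th$ and $\th$ ranges over $(-\pi,\pi]$ for a closed convex curve, this becomes
\[
\frac{dA}{dt}=-\int_{-\pi}^{\pi}\k^{\a-1}(\th,t)\,d\th.
\]

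The key step is to bound the integrand. By Proposition~\ref{lowerkbbound}, taking both signs, $\k^\a(\th,t)\ge|\langle\nu(\th,t),e_2\rangle|=|\cos\th|$. Because $\a\in(\tfrac12,1)$, the exponent $\tfrac{\a-1}{\a}=1-\tfrac1\a$ is negative, so raising this inequality to the power $1-\tfrac1\a$ \emph{reverses} it:
\[
\k^{\a-1}(\th,t)\le|\cos\th|^{1-\frac1\a},\qquad \th\neq\pm\tfrac\pi2.
\]
Integrating and using \eqref{wath} I obtain
\[
\int_{-\pi}^{\pi}\k^{\a-1}(\th,t)\,d\th\le\int_{-\pi}^{\pi}|\cos\th|^{1-\frac1\a}\,d\th=4\int_0^{\pi/2}\cos^{1-\frac1\a}\th\,d\th=2w_\a,
\]
where the improper integral converges precisely because $\a>\tfrac12$. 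Hence $\tfrac{dA}{dt}\ge-2w_\a$ on $(-\infty,0)$.

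Finally I would integrate this differential inequality in time. The function $t\mapsto A(t)+2w_\a t$ is nondecreasing on $(-\infty,0)$, and since a compact convex solution contracts to a point as $t\to 0^-$ we have $A(t)\to 0$, hence $A(t)+2w_\a t\to 0$. A nondecreasing function vanishing at its right endpoint is nonpositive, so $A(t)+2w_\a t\le 0$ for all $t<0$, which is the asserted bound. I do not expect a serious obstacle here: the estimate is essentially a one-line computation once Proposition~\ref{lowerkbbound} is available. The only points that need care are that $\a<1$ forces the curvature inequality to flip when passing from $\k^\a$ to $\k^{\a-1}$, and that the resulting integral $\int|\cos\th|^{1-1/\a}\,d\th$ is finite, both of which rely on the standing assumption $\a\in(\tfrac12,1)$; the genuine observation is simply that Proposition~\ref{lowerkbbound} together with the explicit formula for $w_\a$ produces exactly the constant $2w_\a$.
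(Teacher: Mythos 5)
Your proof is correct and follows the same route as the paper: differentiate the enclosed area, pass to the turning-angle parametrization to get $-\frac{dA}{dt}=\int_{-\pi}^{\pi}\k^{\a-1}\,d\th$, use Proposition~\ref{lowerkbbound} (with the inequality reversing because the exponent $1-\frac1\a$ is negative) to bound this by $2w_\a$ via \eqref{wath}, and integrate from $t$ to $0$. The only difference is that you spell out the sign-flip and the convergence of the improper integral, which the paper leaves implicit.
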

\begin{proof}
 Using Proposition \ref{lowerkbbound} and the fact that $\langle \nu, e_2\rangle=-\cos\th$, we have
\[
\begin{split}
-\frac{dA(t)}{dt}&=\int_{-\pi}^{\pi} \k^{\a-1}(\th)\,d\th\le4\int_0^\frac\pi2\cos\th^{1-\frac1\a}d\th\,.
\end{split}
\]
Hence, recalling \eqref{wath}, we have
\[
\begin{split}
-\frac{dA(t)}{dt}&\le 2w_\a\\
\end{split}
\]
and  integrating from $t$ to $0$,  yields the result. 
\end{proof}

\section{Constructing a compact ancient solution}
\label{sec:Construction}

We will use the translating solution $\{G_t\}_{t\in(-\infty, \infty)}$, as described in Theorem \ref{Urbas}, to construct a compact ancient solution. By the uniqueness of $G_0$,  after a space and time translation,  we can assume that $G_0$ passes through the origin and is reflection symmetric with respect to the $y$-axis, so $G_0\subset \{y\ge 0\}$. Let $\Gamma^R$ be the convex curve we obtain after taking the union of $G_{-R}\cap \{y\le0\}$ and its reflection along the $x$-axis, see Figure \ref{Figure}. This curve is not smooth, but it is convex and thus, a solution to the $\k^\a$ flow  ``flowing out'' of this curve exists, in the sense of the following theorem of Andrews.

\begin{theorem} \label{convexsln}\cite{BA98} 
For any convex curve $\Gamma_0$ bounding an open convex region in $\R^2$,
there exists a family of embeddings $\gamma: S^1\times (0, T)$ unique up to time-independent
reparametrization, which satisfy equation \eqref{ka-flow} and such that the image curves $\Gamma_t$
 converge to $\Gamma_0$ in Hausdorff distance as $t\to 0$. Moreover $\gamma\in C^\infty(S^1\times(0,T))$ and $\Gamma_t$ is strictly convex for all $t>0$.
\end{theorem}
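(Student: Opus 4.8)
\emph{Proof proposal.} The plan is to first handle smooth, strictly convex initial data by quasilinear parabolic theory, and then pass to general convex $\Gamma_0$ by approximation, using comparison with explicit self-shrinking circles to make the needed estimates uniform.

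It is convenient to work with the support function. For a strictly convex curve parametrized by its outward normal angle $\th\in S^1$, set $h(\th,t)=\langle\gamma,\nu\rangle$; then the radius of curvature is $r=h_{\th\th}+h>0$, we have $\k=1/r$, and \eqref{ka-flow} is equivalent to the scalar equation $\partial_t h=-(h_{\th\th}+h)^{-\a}$ on $S^1$. Where $r$ is bounded above and below this equation is uniformly parabolic, so for smooth strictly convex $\Gamma_0$ short-time existence, uniqueness and smoothness of $h$ follow from standard quasilinear parabolic theory (time-independent reparametrizations of $\gamma$ correspond to the same $h$). Differentiating recovers \eqref{kt}, and the maximum principle applied to \eqref{kt} keeps $\k>0$, so convexity is preserved.

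The core of the argument is a set of a priori estimates depending only on the inscribed and circumscribed radii of $\Gamma_0$ and on $t>0$. Circumscribing $\Gamma_0$ by a ball $B_{\r_+}$ and comparing, via the avoidance principle, with the shrinking circle flowing out of $B_{\r_+}$ gives $\Omega_t\subset B_{\r_+(t)}$, which bounds the maximal time $T$ and gives uniform spatial bounds; an inscribed ball provides a lower barrier. The key remaining estimate is an interior-in-time speed bound $\k^\a(\cdot,t)\le C(\r_+)\,t^{-\frac{\a}{\a+1}}$, obtained by applying the maximum principle to an auxiliary quantity such as $t\,\k^{\a}$ modified by bounded geometric factors (in the style of Chou--Zhu and Andrews); the exponent $\tfrac{\a}{\a+1}$ is forced by parabolic scaling and is $<1$. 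Together with the two-sided ball containment this renders the support-function equation uniformly parabolic on $[\d,T-\d]$, whence Krylov--Safonov together with Schauder bootstrapping gives $C^\infty$ bounds there.

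For general convex $\Gamma_0$, choose smooth strictly convex $\Gamma_0^i\to\Gamma_0$ in Hausdorff distance with nested enclosed regions (e.g.\ mollify the support function). The estimates above are uniform in $i$, so Arzel\`a--Ascoli extracts a subsequence of the flows $\Gamma_t^i$ converging in $C^\infty_{\mathrm{loc}}(S^1\times(0,T))$ to a flow $\Gamma_t$ which is strictly convex for $t>0$. That $\Gamma_t\to\Gamma_0$ in Hausdorff distance as $t\to0$ follows from the outer ball comparison (containment from outside) and from the speed bound via $\int_0^t\k^\a\,ds\le C\int_0^t s^{-\frac{\a}{\a+1}}\,ds\to0$ (control from inside). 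Strict convexity for $t>0$ also follows from the strong maximum principle on \eqref{kt} applied to the now genuinely parabolic limit flow, or from Andrews' observation that a flat segment cannot persist. Finally, uniqueness follows from the avoidance principle, which in particular implies the Hausdorff distance between any two solutions is non-increasing, so two solutions attaining the same $\Gamma_0$ in Hausdorff distance coincide. The main obstacle is the interior speed estimate: because $\k^\a$ degenerates where $\k=0$, controlling the curvature (and hence the parabolicity) from the rough initial time requires a carefully chosen maximum-principle quantity, and the $t\to0$ limit and the uniqueness statement for non-smooth data require explicit barriers rather than soft arguments.
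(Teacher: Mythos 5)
The paper does not prove this statement: it is quoted from Andrews \cite{BA98} and used as a black box, so there is no internal argument to compare yours against. Your outline does follow the blueprint of the actual proof --- support-function formulation, shrinking circles as barriers, an interior-in-time speed estimate with the scaling-correct exponent $\tfrac{\a}{\a+1}$, approximation by smooth strictly convex curves, and Krylov--Safonov/Schauder bootstrapping --- but two of its steps contain genuine gaps rather than routine omissions.

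First, the speed estimate, which you rightly identify as the crux, cannot be obtained from a quantity like $t\,\k^\a$ ``modified by bounded geometric factors'' unless the support function enters the denominator. Setting $u=\k^\a$, equation \eqref{kt} gives $u_t=\a\k^{\a+1}(u_{\th\th}+u)$, so at a spatial maximum one only gets $\frac{d}{dt}u_{\max}\le\a\,u_{\max}^{2+1/\a}$. This superlinear differential inequality is compatible with $u_{\max}\equiv+\infty$ and therefore cannot, by itself, force decay from initial data with unbounded curvature (a corner); the actual mechanism is Tso's trick applied to a quantity of the form $\k^\a/(h-\r)$, with the two-sided ball containment keeping $h-\r$ bounded below, and that computation has to be carried out, not invoked by analogy. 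Second, your uniqueness argument is incorrect as stated: the Hausdorff distance between two convex solutions is \emph{not} non-increasing under the flow. Two concentric shrinking circles already violate this, since $R(t)=\bigl(R_0^{1+\a}-(1+\a)t\bigr)^{1/(1+\a)}$ gives $\partial R/\partial R_0=(R_0/R)^{\a}>1$, so the gap between the radii grows. Uniqueness for rough initial data is instead obtained by sandwiching any solution attaining $\Gamma_0$ between the flows of inner and outer smooth strictly convex approximations, applying the avoidance principle to these nested pairs for $t>0$, and using the $t\to0^{+}$ continuity (which again rests on the speed estimate) to make the sandwich tight.
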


\begin{figure}[htbp]
\begin{tikzpicture}[line cap=round,line join=round, scale=1.3]
  \pgfmathsetmacro{\a}{1.50}; 
  \pgfmathsetmacro{\am}{-1*\a}; 
  \pgfmathsetmacro{\p}{3.1416/2}; 
  \draw [->] (-2, 0) -- (2, 0) node[right] {$x$};

    \draw [line width=2pt,smooth,variable=\x,green] plot [samples=100, domain={\am-.06}:{\a+.06}] (\x,{ln(cos(\x r))-ln(cos((\a+.06) r))});
    
  
  \draw [line width=2pt, domain={\am+.09}:{\a-.09},smooth,variable=\x,teal] plot (\x,{ln(cos(\x r))-ln(cos(\a r))});
  \draw [line width=2pt, smooth, teal] ({\am+.09}, {ln(cos((\am + .09) r))-ln(cos(\a r))}) .. controls (\am+.03,0.46) and (\am+.03, 0.3) .. (\am+.03, 0);
  \draw [line width=2pt, smooth, teal] ({\a-.09}, {ln(cos((\am + .09) r))-ln(cos(\a r))}) .. controls (\a-.03,0.46) and (\a-.03, 0.3).. (\a-.03, 0);

  \begin{scope}[yscale=-1,xscale=1]
  
  
    \draw [line width=2pt,smooth,variable=\x,green] plot [samples=50, domain={\am-.06}:{\a+.06}] (\x,{ln(cos(\x r))-ln(cos((\a+.06) r))});
    
  \draw [line width=2pt, domain={\am+.09}:{\a-.09},smooth,variable=\x,teal] plot (\x,{ln(cos(\x r))-ln(cos(\a r))});
  \draw [line width=2pt, smooth, teal] ({\am+.09}, {ln(cos((\am + .09) r))-ln(cos(\a r))}) .. controls (\am+.03,0.46) and (\am+.03, 0.3) .. (\am+.03, 0);
  \draw [line width=2pt, smooth, teal] ({\a-.09}, {ln(cos((\am + .09) r))-ln(cos(\a r))}) .. controls (\a-.03,0.46) and (\a-.03, 0.3).. (\a-.03, 0);
    \end{scope}
    
            \draw [line width=1.5pt ,smooth,variable=\x,loosely dashed, black] plot [samples=100, domain={-1*(\p-.00005)}:{.01}] (\x,{-ln(cos(\x r))+ln(cos((\a+.06) r))});
        \begin{scope}[yscale=1,xscale=-1]
        \draw [line width=1.5pt ,smooth,variable=\x,loosely dashed, black] plot [samples=100, domain={-1*(\p-.00005)}:{.01}] (\x,{-ln(cos(\x r))+ln(cos((\a+.06) r))});
        \end{scope}
       \draw[{Stealth}-{Stealth}, line width=1pt] (-\p,5.5) -- (\p, 5.5) node [pos=.5, above] {$w_{\alpha}$};
   
  
  \draw[color=green] (-.7,3.7) node {$\Gamma^R$};
  \draw[color=teal] (-0.7,-1.6) node { $\Gamma^R_t$};
    \draw[color=black] (1.2,4.5) node {$G_{-R}$};
  \draw[{Stealth}-{Stealth}, line width=1pt] (0,0) -- (\a-.03, 0) node [pos=.5, above] {$h_R(t)$};
  \draw[{Stealth}-{Stealth}, line width=1pt] (0,0) -- (0, {-ln(cos(\a r))}) node [pos=.5, fill=white] {$\ell_R(t)$};
  \draw[{Stealth}-{Stealth}, line width=1pt] (0,0) -- (0, -{-ln(cos((\a+.06) r))}) node [pos=.7, fill=white] {$R$};

  \draw[color=black] (2.5, 4) node[anchor=west] {$\Gamma^R = \Gamma^R_{T_R}$ is the initial curve}; 
  \draw[color=black] (2.5, 3.5) node[anchor=west] {$\Gamma^R_t$ is $t$-slice of the solution};
  \draw[color=black] (2.5, 3.0) node[anchor=west] {$\ell_R(t) = \sup_\theta\langle\gamma^R(\theta, t), e_2\rangle$};
  \draw[color=black] (2.5, 2.5) node[anchor=west] {$h_R(t) =  \sup_\theta\langle\gamma^R(\theta, t), e_1\rangle$};
  \draw[color=black] (2.5, 2.0) node[anchor=west] {$\mathcal{A}^R$ is the area enclosed by $\Gamma^R$};
  \draw[color=black] (2.5, 1.5) node[anchor=west] {$A_R(t)$ is the area enclosed by $\Gamma^R(t)$};

\end{tikzpicture}
\caption{The approximate solutions $\{\Gamma^R_t\}_{t\in (T_R, 0)}$.}
\label{Figure}
\end{figure}
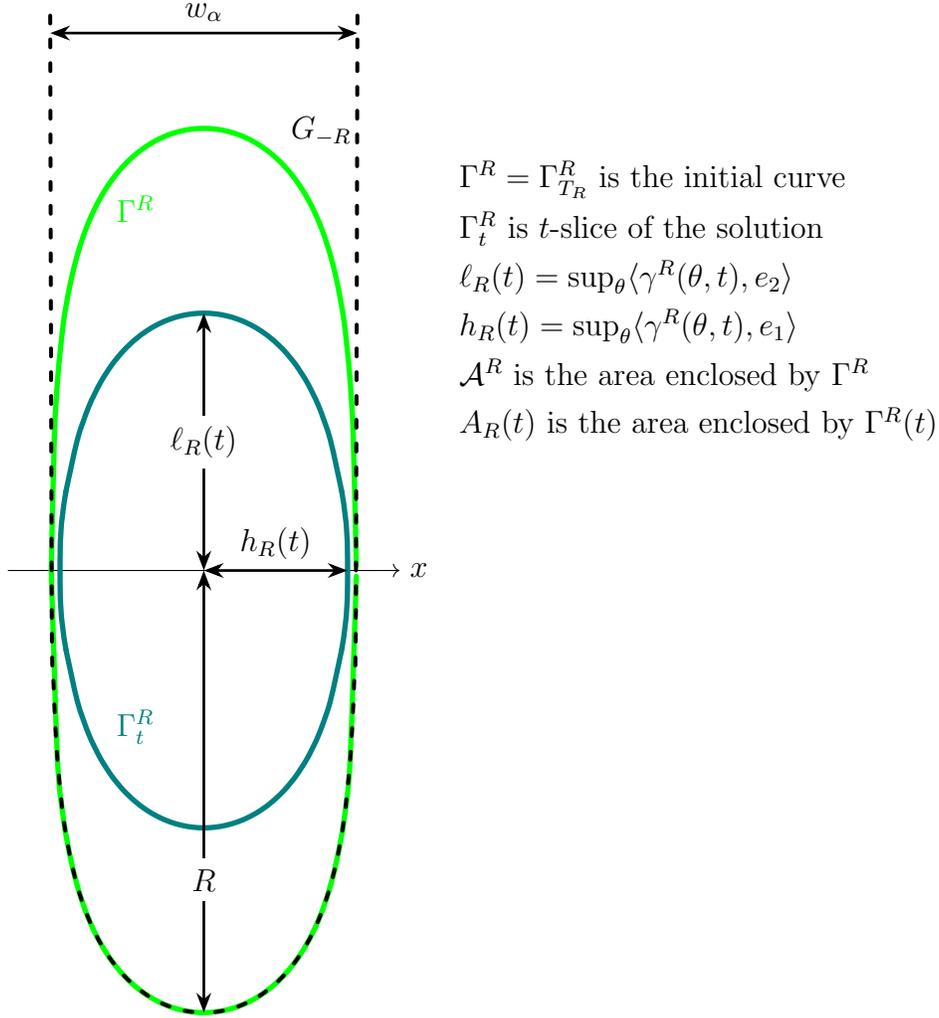

Recall that the solution shrinks to a point in finite time. We can therefore translate time so that the extinction time is $t=0$ for our solutions. With this convention in mind, we define $\{\Gamma^R_t\}_{t\in (T_R, 0)}$ with $T_R \in (-\infty, 0)$ to be a solution to \eqref{ka-flow} with initial data given by $\Gamma^R$ in the sense of Theorem \ref{convexsln} and let $\gamma^R:S^1\times (T_R, 0)\to \R^2$ be a parametrization  of $\{\Gamma^R_t\}_{t\in (T_R, 0)}$ by turning angle, see Figure \ref{Figure}. Moreover, by the uniqueness of the solution given in Theorem \ref{convexsln}, this solution is reflection symmetric with respect to both coordinate axes, which furthermore implies that the extinction point is the origin. 

We will construct an ancient solution  by taking a limit of such solutions as $R\to \infty$. For this, we will first prove that the curvature of and area enclosed by the solutions $\{\Gamma^R_t\}_{t\in (T_R, 0)}$ satisfy bounds similar to the estimates given for convex ancient solutions in Section \ref{sec:AncientSol}. 
\begin{proposition}\label{lowerkbboundR}
For any $R>0$, $t\in (T_R,0)$ and $\th\in S^1$,
\begin{equation*}\label{lowerkbboundR1}
\k^\a(\th,t)\ge \pm \langle\nu(\th,t), e_2\rangle\, .
\end{equation*}
\end{proposition}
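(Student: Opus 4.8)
The plan is to mimic the proof of Proposition~\ref{lowerkbbound}, replacing the ancient-solution input (the $s\to-\infty$ limit is $G_t$) with a direct verification of the inequality at the initial time $t=T_R$, and then propagating it forward by the maximum principle.

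First I would record that, as in Proposition~\ref{lowerkbbound}, the function $u:=\k^\a\mp\langle\nu,e_2\rangle$ satisfies the linear (Jacobi) equation~\eqref{Jaceqn} along the flow $\{\Gamma^R_t\}$; this holds wherever the solution is smooth, i.e. for $t\in(T_R,0)$ by Theorem~\ref{convexsln}. The parabolic maximum principle for~\eqref{Jaceqn} on $S^1\times(T_R,0)$ will give the result provided we can control the sign of $u$ as $t\downarrow T_R$. Since the initial curve $\Gamma^R$ is only Lipschitz (it has corners where the two reflected copies of $G_{-R}\cap\{y\le 0\}$ meet the $x$-axis), I would make this precise using the convergence $\Gamma^R_t\to\Gamma^R$ in Hausdorff distance together with the smooth convergence on the open arcs away from the corners: on any compact subset of $S^1$ corresponding to turning angles bounded away from $\pm\pi/2$, the curves $\Gamma^R_t$ converge smoothly to the corresponding smooth pieces of $\Gamma^R$, which are (up to reflection in the $x$-axis) pieces of $G_{-R}$, and there $\k^\a=\mp\langle\nu,e_2\rangle$ exactly by the defining equation of $G_0$ in Theorem~\ref{Urbas}; hence $\liminf_{t\downarrow T_R}u(\th,t)\ge 0$ there. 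Near the corners (turning angles near $\pm\pi/2$, where the normal is horizontal) the term $\langle\nu,e_2\rangle$ is small, and by convexity $\k>0$ on $\Gamma^R_t$ for $t>T_R$, so one can instead argue directly: the arcs of $\Gamma^R_t$ near $\th=\pm\pi/2$ lie, for $t$ slightly larger than $T_R$, inside the strip $[-w_\a/2,w_\a/2]\times\R$ region swept by $G_{-R}$ shifted, and a barrier comparison with the translator $G_t$ (which satisfies $u\equiv0$) shows $u\ge 0$ there as well. Putting these together gives $u\ge 0$ at (a sequence of times approaching) $T_R$, and then the maximum principle on the smooth part propagates $u\ge 0$ to all $t\in(T_R,0)$.

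A cleaner alternative I would consider, to avoid any delicacy at the corners, is an avoidance-principle argument: the translator $\{G_{t-R}\}$ (the original translator, not reflected) agrees with $\Gamma^R$ on the upper arc $G_{-R}\cap\{y\le 0\}$ at time $T_R$, and $\Gamma^R_t$ is, at time $T_R$, contained in the closed convex region bounded by $G_{-R}\cup(\text{its reflection})$; since $G_0$, hence the translator, satisfies $\k^\a=-\langle\nu,e_2\rangle$, i.e. $u\equiv 0$, and since $u$ is a supersolution-type quantity that is preserved under the flow via~\eqref{Jaceqn}, comparing $\Gamma^R_t$ with the two translator barriers (the upward one and its $x$-axis reflection) at the initial time forces the corresponding inequality $\k^\a\ge\pm\langle\nu,e_2\rangle$ to hold initially in the appropriate half of the curve, after which~\eqref{Jaceqn} and the maximum principle finish it. Either route reduces to the same core fact: the defining ODE of Urbas' translator makes $u$ vanish identically on the pieces of $G_{-R}$, so $u\ge 0$ at $t=T_R$ on $\Gamma^R$.

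The main obstacle is the lack of smoothness of the initial curve $\Gamma^R$ at the two corners, which prevents a naive application of the maximum principle ``up to $t=T_R$'': one must either justify the limit $\liminf_{t\downarrow T_R}u\ge 0$ by combining Hausdorff convergence with local smooth convergence on corner-free arcs and a separate barrier argument near the corners, or set up a comparison with the translator barriers that is insensitive to the corners. I expect the barrier/avoidance formulation to be the least painful, since the translator and its reflection sandwich $\Gamma^R_t$ from the correct sides for $t$ near $T_R$ by construction, and convexity guarantees $\k>0$ for $t>T_R$ so that $u$ is well-defined and smooth there.
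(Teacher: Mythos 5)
Your proposal follows essentially the same route as the paper: the paper's proof simply observes that $\k^\a\mp\langle\nu,e_2\rangle$ satisfies the Jacobi equation \eqref{Jaceqn}, that $\liminf_{t\to T_R}(\k^\a\pm\langle\nu,e_2\rangle)\ge 0$ because of Theorem \ref{convexsln} and the fact that $\Gamma^R$ is built from translator arcs (on which equality holds, with $\langle\nu,e_2\rangle$ vanishing at the corners), and then applies the maximum principle. Your first route is exactly this argument, spelled out in more detail at the corners than the paper bothers to give.
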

\begin{proof} Because of Theorem \ref{convexsln}, the proof follows as in Proposition \ref{lowerkbbound}. Indeed, since
\[
\liminf_{t\to T_R}(\k^\a(\cdot, t)\pm \langle\nu(\cdot,t), e_2\rangle)\ge 0 
\]
and $\k^\a\mp\langle\nu, e_2\rangle$ satisfies \eqref{Jaceqn}, the result follows by the maximum principle.
\end{proof}
Next we show a certain monotonicity of the curvature throughout the flows. In particular, the result tells us that the maximum of $\k^{\alpha}$ occurs at $\theta=0$, so that we just need a bound on $\k(0, t)$ in order to get a uniform bound on the curvature $\k(\theta, t)$ for any $\theta$.
\begin{proposition}\label{k-monotone-t}
For any $R>0$ and $t\in (T_R,0)$, $\Gamma^R_t$ satisfies
\begin{equation}\label{k-monotone}
\begin{cases}
(\k^\a)_\theta\le 0\,,\text{ for }\theta\in (0, \pi/2)\cup(\pi, 3\pi/2)\\
(\k^\a)_\theta\ge 0\,,\text{ for }\theta\in (\pi/2, \pi)\cup(3\pi/2, 2\pi)\,.
\end{cases}
\end{equation}
\end{proposition}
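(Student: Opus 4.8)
The plan is to prove the claimed sign of $(\k^\a)_\theta$ via the maximum principle applied to the $\theta$-derivative of the evolution equation, using symmetry to pin down the behavior at the endpoints. Differentiating \eqref{kt} in $\theta$, the quantity $v:=(\k^\a)_\theta$ satisfies a linear parabolic equation of the form $v_t = \a\k^{\a+1} v_{\theta\theta} + b(\theta,t)\,v_\theta + c(\theta,t)\,v$ for suitable coefficients $b,c$ (the principal coefficient is the same as in the Jacobi equation \eqref{Jaceqn} since $\k^\a \mp \la \nu,e_2\ra$ and $\k$-derived quantities share the linearization); I would record $b$ and $c$ explicitly but not dwell on their exact form, as only the \emph{structure} matters for the comparison argument. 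The key point is that on each of the four open arcs listed in \eqref{k-monotone} the endpoints are among $\theta\in\{0,\pi/2,\pi,3\pi/2\}$, and at these angles the sign of $v$ is determined: by the reflection symmetry of $\Gamma^R_t$ about both coordinate axes (established just before the statement via uniqueness in Theorem~\ref{convexsln}), the support function and hence $\k$ as a function of $\theta$ is even about each of these four angles, so $(\k^\a)_\theta = 0$ there for every $t\in(T_R,0)$.

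The main steps, in order: first, establish the symmetry-forced boundary condition $v(0,t)=v(\pi/2,t)=v(\pi,t)=v(3\pi/2,t)=0$ for all $t$. Second, check the initial-time behavior: as $t\to T_R$, the curves $\Gamma^R_t$ converge to $\Gamma^R$, which is assembled from a piece of the translator $G_{-R}$ (on which $\k^\a = -\la\nu,e_2\ra = \cos\theta$ is decreasing on $(0,\pi/2)$) together with its reflections, plus the straightening near the $x$-axis; on the smooth part this gives $(\k^\a)_\theta\le 0$ on $(0,\pi/2)$ with the analogous signs on the other arcs, and the corner contributes the correct one-sided sign. Hence $\liminf_{t\to T_R} \mp v \ge 0$ on the relevant arc in the appropriate sense. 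Third, apply the maximum principle on each arc $[\theta_1,\theta_2]\times(T_R,0)$ to the function $v$ (or $-v$): since $v$ vanishes on the parabolic boundary portions $\{\theta_1,\theta_2\}\times(T_R,0)$ and has the right sign initially, and satisfies a linear parabolic PDE, it retains that sign throughout. One must handle the zeroth-order coefficient $c$ in the standard way (e.g. multiply by $e^{-Kt}$ to make $c$ effectively nonpositive, or use that $v\equiv 0$ is itself a solution so the strong maximum principle gives the strict inequality unless $v\equiv 0$).

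The step I expect to be the main obstacle is the behavior at the initial time $T_R$, precisely because $\Gamma^R$ is only Lipschitz (it has corners on the $x$-axis where the reflected pieces of $G_{-R}$ meet). The limit $t\to T_R$ is only in Hausdorff distance, so $v=(\k^\a)_\theta$ need not converge classically up to $T_R$; I would instead argue that for $t$ slightly larger than $T_R$ the curvature profile already inherits the monotonicity — either by a barrier/approximation argument (approximate $\Gamma^R$ by smooth convex curves with the monotonicity built in and pass to the limit using the uniqueness and continuous dependence in Theorem~\ref{convexsln}), or by observing that the convexity of $\Gamma^R$ together with the explicit convex-from-translator construction forces $\liminf_{t\to T_R}(\mp v)\ge 0$ in the viscosity sense on each arc, which suffices for the maximum principle. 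A secondary technical point, easily dispatched, is to confirm that the coefficients $b,c$ in the equation for $v$ are continuous and bounded on compact subsets of $S^1\times(T_R,0)$, which follows from the smoothness and strict convexity of $\Gamma^R_t$ for $t>T_R$ guaranteed by Theorem~\ref{convexsln}.
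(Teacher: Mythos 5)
Your strategy coincides with the paper's: differentiate \eqref{kt} to get a parabolic equation for $v=(\k^\a)_\theta$, use the reflection symmetry of $\Gamma^R_t$ about both axes to force $v=0$ at $\theta\in\{0,\pi/2,\pi,3\pi/2\}$, verify the sign on the initial curve $\Gamma^R$, and run the (strong) maximum principle on each of the four arcs. The paper's proof is exactly this, compressed into three lines, with the initial condition justified by ``convexity and the translator equation.'' You have also correctly isolated the delicate step, namely the initialization at $t=T_R$ where $\Gamma^R$ is only Lipschitz.

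However, your resolution of that step is wrong, and this is a genuine gap (one that the paper's one-line justification shares). Let $\theta_R\in(0,\pi/2)$ be the turning angle of $G_{-R}$ at its intersection with the positive $x$-axis. The corner of $\Gamma^R$ there has normal cone spanning the turning angles $[\theta_R,\pi-\theta_R]$, which straddles $\pi/2$. Hence in the Gauss-map parametrization the initial profile of $\k^\a$ on $(0,\pi/2)$ is $\cos\theta$, decreasing from $1$ to $\cos\theta_R<1$ on $(0,\theta_R)$, followed by $+\infty$ on $(\theta_R,\pi/2)$: the singular part of $(\k^\a)_\theta$ at $\theta_R$ is \emph{positive}, i.e.\ the \emph{wrong} sign for that arc (and symmetrically the wrong sign on $(\pi/2,\pi)$). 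So the corner does not ``contribute the correct one-sided sign,'' and $\liminf_{t\to T_R}(-v)\ge 0$ fails on $(0,\pi/2)$ in any reasonable weak or viscosity sense. Your proposed repairs do not work either: smooth convex approximants with $\k^\a$ nonincreasing on $(0,\pi/2)$ and $\k^\a(0)\to 1$ would have curvature bounded by roughly $1$ on that whole arc, hence could not converge in Hausdorff distance to a curve with a corner at $\theta=\pi/2$. Worse, the conclusion itself is in tension with corner smoothing: for $t$ slightly larger than $T_R$ the curvature near $\theta=\pi/2$ blows up like $(t-T_R)^{-1/(1+\a)}$ while $\k(0,t)$ stays near $1$, so $\k^\a$ cannot be monotone on $(0,\pi/2)$ for $t$ close to $T_R$. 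Closing this gap requires a further idea beyond what you (or the paper) supply --- for instance a Sturmian zero-counting argument for $v$ on $S^1$, a modification of the initial curves near the corners, or restricting the claim to times at which the corner effects have dissipated and re-deriving the estimates that invoke it.
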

\begin{proof}
Note first that on $\Gamma^R$ \eqref{k-monotone} is true by convexity and the translator equation.

Let $v=(\k^\a)_\th$. Then, recalling \eqref{kt}, we have
\[
\begin{split}
v_t=(\a+1)\k v ((\k^\a)_{\th\th} + \k^\a)+\a\k^{\a+1}(v_{\th\th}+v)\,.
\end{split}
\]
The strong maximum principle and the Hopf lemma then, applied to each of the four $\theta$- intervals implies the result.
\end{proof}
Next we prove some displacement and area estimates. For this we define
\[
\ell_R(t)=-\langle \gamma^R(0, t), e_2\rangle=\langle\gamma^R(\pi, t), e_2\rangle=\max_{\theta\in [0, 2\pi)}\langle\gamma^R(\th, t), e_2\rangle\,,
\]
\[
h_R(t)=\langle\gamma^R(\tfrac{\pi}{2}, t), e_1\rangle=-\langle\gamma^R(\tfrac{3\pi}{2}, t), e_1\rangle=\max_{\th\in[0, 2\pi)}\langle\gamma^R(\th, t), e_1\rangle\,,
\]
and $A_R(t)$ to be the area enclosed by $\Gamma^R_t$, see Figure \ref{Figure}. 

Note that, by the construction of the initial surface, we have 
$\ell_R(T_R)=R$ and $\lim_{R\to \infty} h_R(T_R)=\tfrac{w_{\alpha}}{2}$. We first show some more precise estimates on $h_R(T_R)$ as well as $A_R(T_R)$.

\begin{proposition}\label{initial data} There exist constants $R_\a>0$ and $C_\a>0$ such that for all $R\ge R_\a$
\begin{itemize}
\item[(i)] $h_R(T_R) \ge \frac{w_\a}{2}- C_\a R^\frac{1-2\a}{1-\a}$\,,
\item[(ii)] $A_R(T_R)\ge 2w_\a R- C_\a R^\frac{2-3\a}{1-\a}$\,.
\end{itemize}
\end{proposition}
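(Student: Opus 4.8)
The plan is to compute both quantities directly from the translating solution $G_{-R}$ and to estimate the error terms by analyzing the behaviour of $G_0$ near the edges of its defining strip. Recall that $\Gamma^R$ is obtained by reflecting $G_{-R}\cap\{y\le 0\}$ across the $x$-axis, so $\ell_R(T_R)=R$ comes from the fact that $G_0$ passes through the origin and $G_{-R}=G_0-Re_2$, while $h_R(T_R)$ is the half-width of the strip containing $G_{-R}\cap\{y\le 0\}$, i.e. the horizontal extent of $G_0$ at height $R$. So for part (i) I first parametrize $G_0$ by turning angle $\th\in(-\pi/2,\pi/2)$: from Theorem \ref{Urbas}, $\k^\a=-\la\nu,e_2\ra=\cos\th$ along $G_0$, hence by \eqref{eq:DiffPosition} the point of $G_0$ with turning angle $\th$ has height $\int_0^\th \frac{\sin u}{\cos^{1/\a}u}\,du$ (measuring from the bottom point $\th=0$ at the origin) and horizontal coordinate $\int_0^\th \frac{\cos u}{\cos^{1/\a}u}\,du$. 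As $\th\to\pi/2^-$ the height blows up, so I need the asymptotics: writing $\th=\pi/2-\phi$ with $\phi\to 0^+$, $\cos\th\sim\phi$, so the height integrand behaves like $\phi^{-1/\a}$ and the height at level $\th$ is $\sim \frac{\a}{1-\a}\phi^{1-1/\a}=\frac{\a}{1-\a}\phi^{-\frac{1-\a}{\a}}$. Setting this equal to $R$ gives $\phi\sim c_\a R^{-\frac{\a}{1-\a}}$. Then $h_R(T_R)=\frac{w_\a}{2}-\int_\th^{\pi/2}\frac{\cos u}{\cos^{1/\a}u}\,du$, and the tail integral $\int_0^\phi \frac{\sin(\pi/2-s)}{\cos^{1/\a}(\pi/2-s)}\,ds\sim\int_0^\phi s^{1-1/\a}\,ds\sim\frac{\a}{2\a-1}\phi^{2-1/\a}$. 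Substituting $\phi\sim c_\a R^{-\a/(1-\a)}$ gives an error of order $\phi^{2-1/\a}=\phi^{\frac{2\a-1}{\a}}\sim R^{-\frac{2\a-1}{1-\a}}=R^{\frac{1-2\a}{1-\a}}$, which is exactly the claimed exponent in (i).

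For part (ii), I would compute $\mathcal{A}^R=A_R(T_R)$, the area enclosed by $\Gamma^R$, as twice the area between $G_{-R}\cap\{y\le 0\}$ and the $x$-axis, i.e. twice the area under the portion of $G_0$ below height $R$. This area equals $2\int (\text{height } R \text{ minus the height of }G_0) \,dx$ integrated over the horizontal extent, or more conveniently $2\big(\text{(width at level }R)\cdot R - \int_0^R (\text{width of }G_0\text{ at level }z)\,dz\big)$ after an integration by parts; the leading term is $2\cdot w_\a\cdot R$ up to the width-defect already controlled in (i) (which contributes an error of order $R\cdot R^{\frac{1-2\a}{1-\a}}=R^{\frac{2-3\a}{1-\a}}$, matching the stated exponent), and the remaining integral $\int_0^R(\text{width defect of }G_0\text{ at height }z)\,dz$ must be shown to be $O(R^{\frac{2-3\a}{1-\a}})$ as well. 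Using the asymptotics above, the width defect of $G_0$ at height $z$ is comparable to $\phi(z)^{2-1/\a}$ where $\phi(z)\sim c_\a z^{-\a/(1-\a)}$, so the integrand is $\sim z^{-\frac{\a}{1-\a}\cdot\frac{2\a-1}{\a}}=z^{-\frac{2\a-1}{1-\a}}$, whose integral over $[1,R]$ is, for $\a<1$, of order $R^{1-\frac{2\a-1}{1-\a}}=R^{\frac{2-3\a}{1-\a}}$ (one checks the exponent $1-\frac{2\a-1}{1-\a}=\frac{(1-\a)-(2\a-1)}{1-\a}=\frac{2-3\a}{1-\a}$). Both contributions therefore have the same order, giving the bound in (ii).

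The main obstacle I anticipate is making the asymptotic analysis of $G_0$ near $\th=\pm\pi/2$ fully rigorous rather than heuristic: one needs two-sided bounds (not just leading-order equivalents) on the integrals $\int_\th^{\pi/2}\frac{\cos u}{\cos^{1/\a}u}\,du$ and on $\cos u$ in terms of $\pi/2-u$, uniformly for $\th$ in a neighbourhood of $\pi/2$, and then invert the height-versus-angle relation to get a two-sided bound on $\phi$ as a function of $R$ for all $R\ge R_\a$. This is elementary but requires care with the constants $R_\a$ and $C_\a$; the cleanest route is probably to change variables to $y=\tan\th$ as in \eqref{wath}, so that the width defect becomes $\int_Y^\infty (1+y^2)^{-\frac12(3-1/\a)}\,dy$ with $Y=\tan\th$, and the height becomes $\int_0^Y (1+y^2)^{-\frac12(3-1/\a)}y\,dy$ (up to the normalization), both of which are standard integrals of powers of $y$ for large $y$: the height is then exactly $\sim\frac{1}{(1/\a-1)\cdot ?}Y^{-(1/\a-1)\cdot\text{something}}$... more precisely the integrand for large $y$ is $\sim y^{1-(3-1/\a)}=y^{1/\a-2}$, whose integral diverges (as it should, since $1/\a-2>-1\iff \a<1$) like $Y^{1/\a-1}$, so $R\sim Y^{1/\a-1}$, i.e. $Y\sim R^{\a/(1-\a)}$; and the width defect for large $Y$ is $\sim Y^{-(3-1/\a)+1}=Y^{1/\a-2}$, giving $\sim R^{\frac{\a}{1-\a}(1/\a-2)}=R^{\frac{1-2\a}{1-\a}}$, confirming (i) with clean elementary estimates. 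Part (ii) then follows by integrating these in $z$ as above. I would present the argument in these $y$-coordinates throughout, since the monotone integrands make two-sided bounds transparent.
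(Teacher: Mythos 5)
Your proposal is correct and follows essentially the same route as the paper: the paper likewise parametrizes the translator by turning angle with $\k^\a=\cos\th$, uses the exact relation $R=\frac{\a}{1-\a}\bigl(\cos\theta_R^{1-\frac1\a}-1\bigr)$ to bound $\tan\theta_R$ from below, estimates the width defect via the substitution $y=\tan\theta$ and the bound $(1+y^2)^{-\frac12(3-\frac1\a)}\le y^{\frac1\a-3}$, and obtains (ii) by integrating the width (equivalently $\tfrac{d\mathcal{A}^R}{dR}$) over the height. The only caveat, which you share with the paper itself, is that for $\a>\frac23$ the integrated width defect is $O(1)$ rather than $O\bigl(R^{\frac{2-3\a}{1-\a}}\bigr)$, which is why the subsequent Proposition in the paper records this error as $C_\a\bigl(1+R^{\frac{2-3\a}{1-\a}}\bigr)$.
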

\begin{proof}
For any $\theta_0\in(0, \tfrac{\pi}{2})$, using \eqref{wath}, we have
\begin{equation}\label{th0est}
\begin{split}
\int_{-\th_0}^{\th_0}\cos\th^{1-\frac1\a}d\th&=w_\a-2\int_{\tan\theta_0}^{+\infty}\frac{1}{(1+y^2)^{\frac12(3-\frac1\a)}}\,dy   \\
& \geq w_{\a} - 2\int_{\tan\theta_0}^{+\infty}y^{-3+\frac1\a}\,dy = w_\a-\frac{2\a}{2\a-1}(\tan\th_0)^{\frac1\a-2}\,.
\end{split}
\end{equation}
Let $\pm \theta_R$ be the turning angles at the two points of intersection of $\Gamma_R$ with the $x$-axis. Then by equation \eqref{eq:DiffPosition},
 \begin{equation}\label{thr}
 R=\int_0^{\th_R}\frac{\sin u}{\k(u,0)} du= \int_0^{\th_R}\frac{\sin u}{\cos u^\frac{1}{\a}} du=\frac{\a}{1-\a}(\cos\theta_R^{1-\frac{1}{\a}}-1)\,.
 \end{equation}
 Therefore
 \[
 \tan^2\theta_R=\cos\theta_R^{-2}-1=\left(\frac{1-\a}{\a} R+1\right)^\frac{2\a}{1-\a}-1\ge \left(\frac{1-\a}{2\a} R\right)^\frac{2\a}{1-\a}\,,
  \]
with the last inequality being true for all $R\ge R_\a$ for a constant $R_\a>0$.
 Now working as in \eqref{thr}, and using \eqref{th0est}, we obtain for $h_R(T_R)$
 \[
 \begin{split}
 h_R(T_R)=\int_0^{\th_R}\cos u^{1-\frac1\a} du&\ge \frac{w_\a}{2}-\frac{\a}{2\a-1}(\tan\theta_R)^\frac{1-2\a}{\a}\\
 &\ge \frac{w_\a}{2}-C_\a R^\frac{1-2\a}{1-\a}\,,
 \end{split}
 \]
 for some constant $C_\a$ and for all $R\ge R_\a$.
 
For the area estimate, we let $\mathcal{A}^R= A_R(T_R)$ be the area enclosed by $\Gamma^R$, so thanks to the symmetry with respect to the $x$-axis, $\frac12 \mathcal{A}^R$ is the area enclosed by $G_{-R}$ and the $x$-axis. Then,  following the proof of Proposition \ref{area} and estimating as above, we obtain
  \[
\frac12 \frac{d\mathcal{A}^R}{dR}=\int_{-\th_R}^{\th_R}\k^{\a-1}(\th)\,d\th= \int_{-\th_R}^{\th_R}\cos\th^{1-\frac1\a}d\th\ge w_\a-2 C_\a R^\frac{1-2\a}{1-\a}\,,
 \]
 for all $R\ge R_\a$.
Integrating from $R_\a$ to $R$ we obtain (ii).
\end{proof}

\begin{proposition}\label{t-Restimates} There exist constants $C_\a$ and $R_\a$ such that for all $R>R_\a$ and all $t\in (T_R,0)$ the following estimates hold.
\begin{itemize}
\item[(i)]  $-2w_a t\ge A_R(t)\ge -2w_\a t- C_\a(-t)^{2-2\a}$\,, 
\item[(ii)] $R\ge -T_R\ge R- C_\a(1+ R^\frac{2-3\a}{1-\a})$\,,
\item[(iii)]$\frac{w_\a}{2}\ge h_R(t)\ge \frac{w_\a}{2}- C_\a(-t)^{1-2\a}$\,,
\item[(iv)] $-t\le \ell_R(t)\le \min\{-t + C_\a\left(1+R^{\frac{2-3\a}{1-\a}}\right), -C_\a t\}$\,,
\item[(v)]$1\le \k_R^\a(0, t)\le C_\a\left(1+\frac{1}{-t}\right)$\,.
\end{itemize}
\end{proposition}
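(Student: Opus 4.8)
The plan is to establish the five estimates roughly in the displayed order, bootstrapping the harder ones from the easier ones. Three tools recur throughout: the evolution identities $\frac{dA_R}{dt}=-\int_0^{2\pi}\k_R^{\a-1}\,d\th$, $\frac{d\ell_R}{dt}=-\k_R^\a(0,t)$ and $\frac{dh_R}{dt}=-\k_R^\a(\tfrac{\pi}{2},t)$, obtained from \eqref{eq:DiffPosition}, \eqref{ka-flow} and the reflection symmetries of $\Gamma^R_t$; the pointwise bounds of Propositions \ref{lowerkbboundR} and \ref{k-monotone-t}; and the avoidance principle, comparing $\Gamma^R_t$ with stationary vertical lines, shrinking circles, and translated copies of $G_t$ and of its reflection across the $x$-axis. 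The first batch of estimates is immediate. By Proposition \ref{lowerkbboundR} (used with both signs) $\k_R^\a\ge|\cos\th|$, so $\k_R^{\a-1}\le|\cos\th|^{1-\frac1\a}$ and hence $-\frac{dA_R}{dt}\le\int_0^{2\pi}|\cos\th|^{1-\frac1\a}\,d\th=2w_\a$ by \eqref{wath}; integrating from $t$ to $0$ (where $A_R(0)=0$) gives the upper bound in (i). Taking $\th=0$ in Proposition \ref{lowerkbboundR}, where $\la\nu(0,t),e_2\ra=-1$, gives $\k_R^\a(0,t)\ge1$, the lower bound in (v); then $\frac{d\ell_R}{dt}\le-1$, and integrating from $t$ to $0$ ($\ell_R(0)=0$) gives $\ell_R(t)\ge-t$ in (iv), while integrating from $T_R$ to $0$ gives $-T_R\le R$ in (ii). The lower bound in (ii) follows from the upper bound in (i) applied at $t=T_R$ together with Proposition \ref{initial data}(ii): $2w_\a(-T_R)\ge\A^R\ge2w_\a R-C_\a R^{\frac{2-3\a}{1-\a}}$. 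Finally $h_R(t)\le\frac{w_\a}{2}$ in (iii) holds because the lines $x=\pm\frac{w_\a}{2}$ are stationary solutions enclosing $\Gamma^R$, so by avoidance $\Gamma^R_t$ stays in the slab.

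For the remaining height and width bounds I would argue geometrically. By construction the top half of $\Gamma^R_{T_R}$ is a subarc of $\ov{G_{-R}}$, the reflection across the $x$-axis of $G_{-R}$, an ``$\cap$''-shaped translator descending at unit speed; hence $\Gamma^R_{T_R}$ lies below $\ov{G_{-R}}$, and by avoidance $\Gamma^R_t$ lies below the correspondingly descended copy of this translator. This bounds the tip, $\ell_R(t)\le R+T_R-t$, which combined with (ii) gives $\ell_R(t)\le -t+C_\a(1+R^{\frac{2-3\a}{1-\a}})$. For the complementary bound $\ell_R(t)\le -C_\a t$ one combines the area bound from (i) with convexity — the region enclosed by $\Gamma^R_t$ contains the rhombus with vertices $(\pm h_R(t),0)$ and $(0,\pm\ell_R(t))$, so $2\ell_R(t)h_R(t)\le A_R(t)\le 2w_\a(-t)$ — together with a lower bound for $h_R$. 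The latter, and the lower bound in (iii), come from observing that $\frac{d}{dt}(\frac{w_\a}{2}-h_R)=\k_R^\a(\tfrac{\pi}{2},t)\ge0$ is monotone and controlling how far $h_R$ has fallen below its near-maximal initial value (Proposition \ref{initial data}(i)) by a displacement estimate in the spirit of Proposition \ref{X-est}, comparing a sufficiently high timeslice of $\Gamma^R_t$ with the translator near the edge of the slab; this yields $h_R(t)\ge\frac{w_\a}{2}-C_\a(-t)^{1-2\a}$.

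The genuinely delicate point is the curvature upper bound $\k_R^\a(0,t)\le C_\a\bigl(1+\frac{1}{-t}\bigr)$ in (v): every estimate above only ever forces the curvature to be \emph{large} (convexity makes the curve bulge), so this one must be extracted from the evolution equation \eqref{kt}. At $\th=0$ one has $(\k^\a)_\th=0$ and $(\k^\a)_{\th\th}\le0$ by Proposition \ref{k-monotone-t}, so $\frac{d}{dt}\k_R^\a(0,t)\le\a\,\k_R^{2\a+1}(0,t)$; since $\k_R^\a(0,T_R)=1$, this ODE gives a bound, uniform in $R$, on any fixed-length time interval just past $T_R$. For $t$ near the extinction time $0$, comparison with an inscribed shrinking circle (using that $\Gamma^R_t$ shrinks to the origin at $t=0$, so its inradius is at most $((\a+1)(-t))^{1/(\a+1)}$), with the disk tangent near the tip, gives $\k_R^\a(0,t)\le C_\a(-t)^{-\a/(\a+1)}\le C_\a(-t)^{-1}$ for $-t$ small. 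The hard range is the intermediate one, where $-T_R\approx R\to\infty$: there one must show that the tip of $\Gamma^R_t$ remains close to the unit-scale translator tip, so that $\k_R^\a(0,t)$ stays bounded by a fixed constant, which I expect requires an inscribed barrier built from a scaled, truncated copy of $G$ and controlled by the height and width estimates already in hand. Assembling the three regimes into the single bound $C_\a(1+\frac{1}{-t})$ is where the main work lies.

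With (v) available, the lower bound in (i) follows by revisiting the area ODE. Set $D(t)=2w_\a(-t)-A_R(t)$, which is $\ge0$ by the upper bound in (i); then $-D'(t)=\int_0^{2\pi}\bigl(|\cos\th|^{1-\frac1\a}-\k_R^{\a-1}\bigr)\,d\th\ge0$ by Proposition \ref{lowerkbboundR}. Estimating the integrand — it is small where $\k_R^\a$ is close to $|\cos\th|$ (i.e.\ where $\Gamma^R_t$ is close to a doubled translator), while near $\th=0,\pi$ one uses the curvature upper bound of (v) and near $\th=\pm\frac{\pi}{2}$ one uses $\k_R^\a\ge|\cos\th|$ — one obtains $-D'(t)\le C_\a(-t)^{1-2\a}$; integrating from $t$ to $0$ (with $D(0)=0$) gives $D(t)\le C_\a(-t)^{2-2\a}$, which is the claim.
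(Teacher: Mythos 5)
Your proposal gets the easy halves right, and essentially as the paper does: the upper bound in (i) from $\k^{\a-1}\le|\cos\th|^{1-\frac1\a}$, the lower bounds $\k^\a(0,t)\ge1$ and $\ell_R(t)\ge-t$, both inequalities in (ii), the upper bound in (iii) by confinement to the slab, and the two upper bounds in (iv) (your rhombus argument for $\ell_R(t)\le-C_\a t$ and your descending-translator barrier are equivalent to the paper's integration of $-\ell_R'=\k_R^\a(0,t)\ge1$ from $T_R$ to $t$). However, the three hard estimates --- the lower bounds in (i) and (iii) and the upper bound in (v) --- are precisely where your sketch has genuine gaps, and in each case the paper relies on a specific device that your outline is missing.

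For the lower bound in (iii), you correctly reduce to controlling $\int_{T_R}^t\k_R^\a(\tfrac\pi2,s)\,ds$, but ``a displacement estimate in the spirit of Proposition \ref{X-est}'' cannot produce the rate $(-t)^{1-2\a}$: that proposition bounds horizontal displacements from above, which is the wrong direction here. The paper's key step is geometric: by Proposition \ref{k-monotone-t} and a maximum principle argument, the circle centred on the negative $x$-axis through $\gamma^R(0,t)$ and $\gamma^R(\tfrac\pi2,t)$ touches $\Gamma^R_t$ from inside at $\gamma^R(\tfrac\pi2,t)$, giving $\k_R(\tfrac\pi2,t)\le 2h_R/(\ell_R^2+h_R^2)\le 2h_R/(-t)^2$; integrating the resulting differential inequality for $h_R^{1-\a}$ yields the stated rate. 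For the lower bound in (i), your plan needs a quantitative statement that $\k^\a$ is close to $|\cos\th|$ away from the tips, which is established nowhere; the paper avoids this entirely via the identity $-\tfrac14 A_R'(t)=\int_0^{\pi/2}\k^{\a-1}\,d\th\ge\int_0^{\pi/2}\tfrac{\cos\th}{\k}\,d\th=h_R(t)$, after which (iii) finishes the job. For the upper bound in (v), you yourself flag the intermediate regime as unproved, and the near-extinction step is backwards: an upper bound on the inradius does not supply an inscribed disc of controlled radius tangent at the tip, so it cannot bound $\k(0,t)$ from above. The paper closes all regimes at once with the differential Harnack inequality: monotonicity of $\k(0,t)(-t)^{\frac{\a}{\a+1}}$ gives $\ell_R(t)=\int_t^0\k^\a(0,s)\,ds\ge c_\a\,\k^\a(0,t)(-t)$, which combined with the bound $\ell_R(t)\le-C_\a t$ (which you do have) yields (v) immediately. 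These three devices carry the substance of the proposition; without them the argument does not close.
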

\begin{proof}
The proof of the first inequality in (i) follows the corresponding proof of Proposition \ref{area} for ancient solutions, using here  Proposition \ref{lowerkbboundR} instead of Proposition \ref{lowerkbbound}.

By Proposition \ref{lowerkbboundR}, we have
\[
-\ell_R'(t)=\k^\a_R(0, t)\ge -\langle \nu(0, t), e_2\rangle=1\,.
\]
Integrating from $t$ to $0$ yields the first inequality of (iv). As we shall use it later, we point out that integrating from $T_R$ to $t$ yields
\begin{equation}\label{elles}
\ell_R(t)\le R-(t-T_R)\,.
\end{equation}
Applying the first inequality of (iv) with $t=T_R$ yields the first inequality in (ii). The second inequality in (ii) is a consequence of the first inequality in (i) applied with $t=T_R$ and estimate (ii) of Proposition \ref{initial data}. 

The first inequality in (iii)  follows from the fact that $\Gamma^R$ is in the strip $\left(-\frac{w_{\alpha}}{2}, \frac{w_{\alpha}}{2}\right) \times \R$. To prove the second inequality, we use a technique from \cite[Lemma 4.4]{BLT1}.  Consider the circle centered on the negative $x$-axis and passing through $\gamma^R(\tfrac\pi2,t)$ and $\gamma^R(0,t)$. By Proposition \ref{k-monotone-t} and a simple maximum principle argument, it must be tangent to the curve at $\gamma^R(\tfrac{\pi}{2},t)$ from the inside, see \cite[Lemma 4.4]{BLT1} for details on this argument. We therefore have
\[
\k_R(\tfrac{\pi}{2}, t)\le \frac{2h_R(t)}{\ell_R^2(t)+h_R^2(t)}\,.
\]
We use this to compute
\[
-h_R'(t)=\k_R^\a(\tfrac{\pi}{2}, t)\le \left(\frac{2h(t)}{\ell^2(t)+h^2(t)}\right)^\a\le  \frac{2^\a h^\a(t)}{(-t)^{2\a}}\,,
\]
where in the last inequality we used the first inequality of (iv). Hence
\[
-\frac{1}{1-\a}(h_R^{1-\a}(t))'\le \frac{2^\a}{2\a-1}((-t)^{1-2\a})'
\]
and integrating from $T_R$ to $t$ we obtain
\begin{equation*}\label{hest}
h_R^{1-\a}(t)\ge h_R^{1-\a}(T_R) +\frac{2^\a(1-\a)}{2\a-1}\left((-T_R)^{1-2\a}-(-t)^{1-2\a}\right)\,
\end{equation*}
and using estimate (i) of Proposition \ref{initial data} and the first inequality in (ii) here yields the result. 

To prove the second inequality in (i), we compute using Proposition \ref{lowerkbboundR} and  \eqref{eq:DiffPosition} 
\[
-\frac{1}{4}\frac{dA_R(t)}{dt}=\int_{0}^{\frac\pi2} \k^{\a -1}\, d\theta \ge \int_{0}^{\frac\pi2} - \frac{\langle \nu,  e_2 \rangle}{\k(\theta, t)} \, d\theta = \int_{0}^\frac\pi2 \frac{\cos \theta}{\k(\theta,t)} d\theta=h_R(t).
\]
%
Integrating from $t$ to $0$, and using the second inequality of (iii), yields the result.

To prove the second inequality in (iv), we first note that  \eqref{elles} along with the second inequality in (ii) here implies that
\[
\ell_R(t)\le -t + C_\a\left(1+R^{\frac{2-3\a}{1-\a}}\right)\,.
\]
Furthermore, the first inequality in (i) and the second in (ii), along with  convexity, yield the uniform estimate $\ell_R(t)\le -C_\a t$.

Finally, the first inequality in (v) is straightforward by Proposition~\ref{lowerkbboundR}. The upper bound on $\k$ is a result of the differential Harnack inequality \cite{Chow91}, according to which, the quantity
\[
\k(\theta, t)(-t)^{\frac{\a}{\a+1}}
\]
is non decreasing in $t$. We thus have
\[
\begin{split}
\ell_R(t)&=\int_t^0\k^\a(0, s)ds\ge \k^\a(0, t)(-t)^\frac{\a^2}{\a+1}\int_0^t (-s)^{-\frac{\a^2}{\a+1}} ds\\
&=\frac{\a+1}{\a+1-\a^2}\k^\a(0, t)(-t)\,.
\end{split}
\]
which, along with the uniform bound  $\ell_R(t)\le -C_\a t$,  yields the result.
\end{proof}

The following is a more precise version of Theorem \ref{main thm}.

\begin{theorem}\label{conv}
For any $\a\in (1/2,1]$, there exists a compact convex ancient solution to the $\k^\a$ flow, $\{\Gamma_t\}_{t\in (-\infty, 0)}$, that sweeps out $(-\tfrac {w_\a}{2}, \tfrac{w_\a}{2})\times\R$. This solution is symmetric with respect to both coordinate axes and satisfies
\begin{equation}\label{h-t}
h(t)\ge\frac{w_{\alpha}}{2}-\frac{2}{(-t)^{2\a-1}}\,,
\end{equation}
where $h(t)=\langle\gamma(\frac\pi2, t), e_1\rangle$.
Moreover, if $\a\in (\tfrac23,1)$ then
\begin{equation}\label{ell-t}
\ell(t)\le -t+O(1)\,,
\end{equation}
as $t\to-\infty$, where $\ell(t)=\langle\gamma(\pi, t), e_2\rangle$.
\end{theorem}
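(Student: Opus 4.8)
\emph{Proof proposal.} I would produce the solution of Theorem~\ref{conv} as a subsequential limit, as $R\to\infty$, of the approximating flows $\{\Gamma^R_t\}_{t\in(T_R,0)}$ constructed above, and read off its properties from the uniform bounds of Proposition~\ref{t-Restimates}.

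\textbf{Passage to the limit.} Fix $[a,b]\subset(-\infty,0)$. Since $-T_R\ge R-C_\a(1+R^{(2-3\a)/(1-\a)})\to\infty$ by Proposition~\ref{t-Restimates}(ii), every $\Gamma^R_t$ is defined on $[a,b]$ once $R$ is large. There Proposition~\ref{t-Restimates} provides bounds uniform in $R$: an upper bound $\k^\a_R(\th,t)\le\k^\a_R(0,t)\le C_\a\big(1+\tfrac1{-b}\big)$ on all of $S^1$, combining (v) with Proposition~\ref{k-monotone-t} and the symmetry across both axes (which put $\max_\th\k^\a$ at $\th=0$); the lower bound $\k^\a_R(\th,t)\ge|\cos\th|$ of Proposition~\ref{lowerkbboundR}, uniform for $\th$ away from $\pm\tfrac\pi2$; and confinement of the enclosed bodies $\Omega^R_t$ to the fixed compact set $[-\tfrac{w_\a}2,\tfrac{w_\a}2]\times[C_\a a,-C_\a a]$ from (i) and (iv). Interior estimates for the flow then upgrade the two-sided curvature bounds to uniform $C^\infty$ bounds for turning angles in compact subsets of $S^1\setminus\{\pm\tfrac\pi2\}$ and $t\in[a,b]$, so along a subsequence $R_j\to\infty$ the flows converge there in $C^\infty_{\loc}$; simultaneously Blaschke selection on the convex bodies $\Omega^{R_j}_t$, together with a diagonal argument over an exhaustion of $(-\infty,0)$, produces Hausdorff limits $\Omega_t$ for every $t<0$, whose boundaries are identified with the smooth limit curves by convexity. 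This gives a convex limit flow $\{\Gamma_t\}_{t\in(-\infty,0)}$, symmetric across both coordinate axes because each $\Gamma^R_t$ is (by the uniqueness in Theorem~\ref{convexsln}).

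\textbf{The limit is a genuine compact ancient solution that sweeps out the slab.} It is defined for all $t<0$ since $T_R\to-\infty$. Passing Proposition~\ref{t-Restimates}(iii)--(iv) to the limit gives $h(t)\ge\tfrac{w_\a}2-C_\a(-t)^{1-2\a}$ and $\ell(t)\ge-t$, so $\Omega_t$ has non-empty interior once $-t$ is large, and there $\Gamma_t$ is a smooth strictly convex closed curve (by the interior estimates off $\th=\pm\tfrac\pi2$ and the Hausdorff convergence elsewhere). Fixing such a time $t^\ast$, Theorem~\ref{convexsln} produces the smooth strictly convex compact solution emanating from $\Omega_{t^\ast}$; by uniqueness it agrees with $\{\Gamma_t\}$ thereafter, and it cannot contract to a point before $t=0$, since $\ell(t)\ge-t>0$ keeps $\diam\Omega_t>0$ for every $t<0$, while $A(t)\le2w_\a(-t)$ forces extinction exactly at $t=0$ (at the origin, by symmetry). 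Finally the bodies are nested, $\Omega_{t_1}\supset\Omega_{t_2}$ for $t_1<t_2$, and each contains the quadrilateral with vertices $(\pm h(t),0)$ and $(0,\pm\ell(t))$; as $t\to-\infty$, $h(t)\to\tfrac{w_\a}2$ and $\ell(t)\ge-t\to\infty$, so these quadrilaterals exhaust $(-\tfrac{w_\a}2,\tfrac{w_\a}2)\times\R$, while a maximum principle argument (or the dichotomy of Chen) keeps each $\Omega_t$ inside the open slab; hence $\bigcup_{t<0}\Omega_t=(-\tfrac{w_\a}2,\tfrac{w_\a}2)\times\R$.

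\textbf{Sharp asymptotics, and the main obstacle.} Estimate \eqref{h-t} already has the form produced by the limit of (iii); to pin down the constant I would re-run the argument of that proof on the limit flow, where it can now be integrated from $t=-\infty$: the inscribed-circle bound gives $-h'(t)=\k^\a(\tfrac\pi2,t)\le\big(\tfrac{2h(t)}{h^2(t)+\ell^2(t)}\big)^\a\le\big(2h(t)\big)^\a(-t)^{-2\a}$ using $\ell(t)\ge-t$, and since $-h'\ge0$ gives $h(t)\to\tfrac{w_\a}2$ as $t\to-\infty$, integrating over $(-\infty,t]$ yields \eqref{h-t}. For \eqref{ell-t}: when $\a\in(\tfrac23,1)$ the exponent $\tfrac{2-3\a}{1-\a}$ is negative, so $R^{(2-3\a)/(1-\a)}\to0$, and passing Proposition~\ref{t-Restimates}(iv) to the limit gives $\ell(t)\le-t+C_\a$ at once; this is the only point where $\a>\tfrac23$ enters. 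I expect the main difficulty to lie in the passage to the limit: making sure that the limit, which must survive the blow-up of the curvature upper bound as $t\to0$ and the degeneration of the curvature lower bound as $\th\to\pm\tfrac\pi2$, is for every $t<0$ a non-degenerate compact convex curve rather than collapsing to a segment or a point --- which is precisely what the sharp width and curvature estimates of Proposition~\ref{t-Restimates} are arranged to prevent.
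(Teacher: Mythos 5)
Your proposal is correct and follows essentially the same route as the paper: extract a subsequential limit of the flows $\{\Gamma^R_t\}$ using the uniform curvature and displacement bounds of Propositions \ref{k-monotone-t} and \ref{t-Restimates} (together with $T_R\to-\infty$), and read off \eqref{h-t} and \eqref{ell-t} from the limits of (iii) and (iv). Your extra care with the explicit constant in \eqref{h-t} (the paper simply cites (iii), which only gives an unspecified $C_\a$) and with identifying the limit as the unique smooth flow out of a time-slice --- which is precisely the paper's avoidance-principle/weak-solution step invoking Theorem \ref{convexsln} --- adds detail rather than a different idea.
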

\begin{proof}
Note that, by Proposition \ref{t-Restimates}, the diameter and the curvature of the solutions $\{\Gamma^R_t\}_{t\in (T_R, 0)}$ are bounded uniformly in $R$, since by Proposition \ref{k-monotone-t} the maximum of the curvature is at $\theta=0$. Recall also that, by Proposition \ref{t-Restimates}, $T_{R}\to -\infty$ as $R\to \infty$. Therefore there exists a sequence $R_j\to \infty$ such that the sequence of flows $\{\Gamma^{R_j}_t\}_{t\in (T_{R_j}, 0)}$ converges locally uniformly to a family of compact convex curves $\{\Gamma_t\}_{t\in (-\infty, 0)}$. The uniform lower bound on $h_R(t)$ of Proposition \ref{t-Restimates} shows that this family sweeps out  $(-\tfrac {w_\a}{2}, \tfrac{w_\a}{2})\times\R$ and that it satisfies \eqref{h-t}. In case $\a\in(\tfrac23,1)$, the estimate \eqref{ell-t} is a consequence of the estimate on $\ell_R(t)$ in Proposition \ref{t-Restimates} (iv), since $\frac{2-3\a}{1-\a}<0$.

Finally, this family is a weak solution of the $\k^\a$ flow, in the sense that it satisfies the avoidance principle with respect to any other smooth solution. This is easy to see since it is a limit of smooth flows. Therefore, by Theorem \ref{convexsln}, it is indeed a smooth solution.
\end{proof}

\section {Classification of convex ancient solutions}
\label{sec:Uniqueness}
In this section, we will classify all convex ancient solution  $\{\Gamma_t\}_{t\in(-\infty, 0)}$ to the $\k^\a$ flow for $\a\in(\frac23,1]$ which are contained in the strip $\Pi:=\{(x,y)
:|x|<w_\a/2\}$ and in no smaller strip. Moreover we will show, for any $\a\in(1/2,1]$, that the only non-compact ones are translating solutions. The proof is very similar to the corresponding proof for the curve shortening flow case ($\a=1$) given in \cite[Theorem 3.3]{BLT3}.
We first show that any compact convex ancient solution that lies in a strip is reflection symmetric with respect to the mid-line of the strip, which is proven by using 
the Alexandrov reflection principle \cite{Chow97,ChGu01}. The proof follows \cite[Lemma~2.4]{BLT3} (cf. \cite{BrLo}). 

The non-compact case can be in fact dealt with in a much simpler way by looking at the asymptotic translators at $+\infty$ and $-\infty$. This idea follows the proof of \cite[Theorem 1.2]{CCD20}, by using here Proposition \ref{edges}. We present this result first.

\begin{theorem}\label{thm2II}
 Any convex, non-compact ancient solution to the $\k^\a$ flow, $\a\in (1/2,1]$, must be a translating solution.
\end{theorem}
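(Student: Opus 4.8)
The plan is to exploit the structure provided by the Harnack inequality \eqref{Har} together with Proposition \ref{edges} (in the form appropriate for the non-compact setting), which tells us that the two "ends" of the solution, obtained as limits of $\Gamma_{t+s}-\gamma(\theta,s)$ as $s\to-\infty$, are copies of Urbas' translator $\{G_t\}$. First I would set up the parametrization by turning angle $\gamma:\Theta\times(-\infty,0)\to\R^2$, noting that for a non-compact convex ancient solution $\Theta$ is a proper open subarc of $[0,2\pi)$; using a rotation we may arrange that the solution lies in the slab $\Pi=\{|x|<w_\a/2\}$ and in no smaller slab (if the solution were entire we would be done by Chen's theorem, so we may assume it lies in a slab, and after rescaling the slab has width exactly $w_\a$ since the asymptotic translator has that width). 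The turning angle then ranges over $(-\tfrac\pi2,\tfrac\pi2)$ (up to relabeling), with $\theta\to\pm\tfrac\pi2$ corresponding to the two ends where the tangent lines become vertical.

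Next I would examine the curvature $\k(\theta,t)$. By the Harnack inequality $\k_t\ge 0$, so for each fixed $\theta$ the limit $\k^{\infty}(\theta):=\lim_{t\to-\infty}\k(\theta,t)$ exists (possibly $0$; it is bounded above since $\k$ is bounded on each timeslice and monotone in $t$). By the compactness argument recalled in Section \ref{sec:AncientSol}, the rescaled-by-translation flows $\Gamma_{t+t_i}-\gamma(\theta,t_i)$ subconverge to a translator whose curvature at turning angle $\theta$ is exactly $\k^{\infty}(\theta)$; applying Proposition \ref{edges} (or rather the argument behind it, which shows the asymptotic translator fills no smaller slab than $w_\a$) this translator is exactly $\{G_t\}$, so $\k^{\infty}(\theta)=\k_{G_0}(\theta)=|\cos\theta|^{1/\a}$, the curvature function of Urbas' solution. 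Thus $\k(\theta,t)\ge|\cos\theta|^{1/\a}$ for all $t$ — this is the analogue of Proposition \ref{lowerkbbound} in the non-compact case, and it follows either from the monotonicity and the limit, or directly by the maximum principle applied to the Jacobi-type inequality for $\k^\a\mp\langle\nu,e_2\rangle$ as in Proposition \ref{lowerkbbound}.

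Now comes the heart of the argument. I would like to conclude that $\k(\theta,t)$ is in fact \emph{independent of $t$}, equal to $|\cos\theta|^{1/\a}$, which forces the solution to be the translator $\{G_t\}$. The Harnack inequality is strict unless the solution moves by translation, so it suffices to show $\k_t\equiv 0$ somewhere, or equivalently that the function $t\mapsto \k(\theta_0,t)$ is constant for some interior $\theta_0$. One clean way: consider the width function $w(t):=\langle\gamma(\tfrac\pi2{}^-,t),e_1\rangle-\langle\gamma(-\tfrac\pi2{}^+,t),e_1\rangle$, the "horizontal extent", which in terms of the curvature is $w(t)=\int_{-\pi/2}^{\pi/2}\frac{\cos u}{\k(u,t)}\,du$. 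Since the solution lies in the slab of width $w_\a$ for all $t$, $w(t)\le w_\a$; but by the lower bound $\k\ge|\cos\theta|^{1/\a}$ we get $w(t)\le\int_{-\pi/2}^{\pi/2}|\cos u|^{1-1/\a}\,du=w_\a$ with equality forced. Hence $\k(u,t)=|\cos u|^{1/\a}$ for a.e.\ $u$, hence everywhere by smoothness, for every $t$. Then from $\k(\theta,t)=|\cos\theta|^{1/\a}$ and the position formula \eqref{eq:DiffPosition} one reconstructs each $\Gamma_t$ as a vertical translate of $G_0$, and \eqref{kt} (or directly the equation $\k^\a=-\langle\nu,e_2\rangle$) shows the vertical speed is $1$, so $\{\Gamma_t\}=\{G_{t+c}\}$ for some constant $c$.

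The main obstacle I anticipate is the justification that the width integral $w(t)=\int_{-\pi/2}^{\pi/2}\frac{\cos u}{\k(u,t)}\,du$ is genuinely equal to the horizontal width of the slab — i.e., that the improper integral converges and computes $\lim_{\theta\to\pi/2}\langle\gamma(\theta,t),e_1\rangle-\lim_{\theta\to-\pi/2}\langle\gamma(\theta,t),e_1\rangle$, and that this equals $w_\a$ rather than something strictly smaller. This is where one must use that the solution is in \emph{no smaller} slab, together with the asymptotic-translator identification of Proposition \ref{edges} to pin the scale at $\lambda=1$; near $\theta=\pm\pi/2$ the integrand behaves like $|\cos u|^{1-1/\a}$, which is integrable precisely because $\a>1/2$ (so $1-1/\a>-1$), exactly matching the hypothesis. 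An alternative, perhaps cleaner, route avoiding the convergence subtleties is to run the Alexandrov reflection / maximum principle comparison directly against the translator $\{G_{t+s}\}$ for suitable $s$, using the asymptotics at both ends to sandwich $\Gamma_t$ between two translates of $G_t$ and then letting the translates coalesce; this mirrors the strategy the authors attribute to \cite{CCD20} and is the approach I would ultimately write up, with the curvature-monotonicity argument above as the conceptual backbone.
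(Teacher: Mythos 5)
Your conceptual backbone (Harnack monotonicity, identification of the backward asymptotic translator, rigidity in the Harnack inequality) overlaps with the paper's, but the step you rely on to force $\k(\theta,t)\equiv|\cos\theta|^{1/\a}$ has a genuine gap. You write $w(t)=\int_{-\pi/2}^{\pi/2}\frac{\cos u}{\k(u,t)}\,du\le w_\a$ and assert that equality is ``forced'' because the solution lies in no smaller slab. But ``lies in no smaller slab'' is a statement about $\bigcup_{t}\Gamma_t$, and it is entirely compatible with the individual timeslices being graphs over intervals $I_t$ with $|I_t|=w(t)<w_\a$ for later $t$: the Harnack inequality gives $\partial_t(1/\k)\le 0$, hence $w(t)$ is \emph{non-increasing}, so the backward limit $w(t)\to w_\a$ yields only the inequality $w(t)\le w_\a$ you already had, not the equality you need. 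What is missing is precisely the statement that a convex complete graph over an open interval remains a complete graph over the \emph{same} interval under the flow; the paper imports this from B.~Choi--K.~Choi--Daskalopoulos (based on \cite{CDK19}) and it is not a soft consequence of the avoidance principle (avoidance with vertical lines only prevents $I_t$ from growing, not from shrinking). Without it, your equality-forcing argument does not close at any fixed time $t$.

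The paper's actual route is different and sidesteps this: it first uses \cite{CDK19} to conclude the solution is \emph{eternal}, then compares the translator $T$ obtained as the limit of $\Gamma_t-\gamma(e_2,t)$ as $t\to-\infty$ with the translator $T_0$ obtained as $t\to+\infty$, shows $T=T_0$ (via the graph-interval preservation in the slab case, and a barrier argument in the entire case), and concludes by the rigidity case of the Harnack inequality, since the curvature is monotone in $t$ and has equal limits at $\pm\infty$. Note also that the forward-in-time limit requires the nontrivial input that the solution exists for all positive time, which your write-up never establishes. Your hedged ``alternative route'' via Alexandrov reflection against translates of $G_t$ is too sketchy to evaluate, but as stated it would face the same issue of controlling the solution at a \emph{fixed} time from information that is only asymptotic as $t\to-\infty$. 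To repair your argument, either invoke the interval-preservation result explicitly to get $w(t)=w_\a$ for every $t$, or switch to the forward/backward translator comparison as in the paper.
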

\begin{proof}
Since the solution is convex, it is a graph over some interval $I$ (bounded or unbounded) which, without loss of generality, is an interval of the $x$-axis. Moreover, by \cite{CDK19}, this solution must in fact be eternal and we denote it by $\{\Gamma_t\}_{t\in (-\infty,+\infty)}$.  As we have already discussed, $\Gamma_t- \gamma(e_2,t)$ converges, as $t\to-\infty$, to a translator $T$. $T$ is either a straight line or it is defined between two parallel lines \cite{Urbas98}.  B. Choi, K. Choi and Daskalopoulos in \cite{CCD18} observed that, by the result in \cite{CDK19}, a convex complete graph over an open interval $I$
(either bounded or unbounded), under the flow, remains a convex complete graph over the same interval $I$. Hence, the solution is either entire or lies between two parallel lines.  $\Gamma_t- \gamma(e_2,t)$ converges, as $t\to+\infty$ to a translator $T_0$. In the case the solution is between two parallel lines, $T_0=T$ by \cite[Theorem 1.1]{CCD18}.  In case the solution is entire, then so is $T_0$ (which can be easily verified, for example by using translating solutions as barriers) and thus again $T=T_0$. Hence in both cases $T=T_0$ and by the rigidity case of the Harnack inequality we obtain the result.
\end{proof}

\begin{lemma}\label{lem:refl} 
Let $\{\Gamma_t\}_{t\in(-\infty, 0)}$ be a strictly convex ancient solution to the $\k^\a$ flow, $\a\in(1/2,1]$, which is contained in the strip $\Pi:=\{(x,y)
:|x|<w_\a/2\}$ and in no smaller strip. Then $\Gamma_t$ is reflection symmetric about the $y$-axis for all $t<0$.
\end{lemma}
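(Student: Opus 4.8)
The plan is to use the Alexandrov reflection principle with respect to vertical lines $\{x = \mu\}$ and show that the "reflection comparison" must hold for $\mu = 0$, which gives the desired symmetry. This is a standard moving-plane type argument, carried out in reverse time.

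First I would set up notation: for a vertical line $\ell_\mu = \{x = \mu\}$ with $\mu \ge 0$, let $R_\mu$ denote reflection across $\ell_\mu$, and define the "cap" $\Gamma_t^{\mu,+}$ to be the portion of $\Gamma_t$ lying in $\{x > \mu\}$. Since $\Gamma_t$ is strictly convex and contained in $\Pi = \{|x| < w_\a/2\}$, for any $\mu \in (0, w_\a/2)$ the reflected cap $R_\mu(\Gamma_t^{\mu,+})$ is a graph over (part of) $\ell_\mu$, lying on the left side. The Alexandrov reflection principle for the $\k^\a$ flow (as in \cite{Chow97, ChGu01}) states that if at some initial time the reflected cap lies weakly inside the region bounded by $\Gamma_t$ on the left of $\ell_\mu$, then this containment persists — and in fact becomes strict unless the solution is symmetric about $\ell_\mu$. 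The key point, following \cite[Lemma~2.4]{BLT3}, is to establish this containment in the limit $t \to -\infty$ for every $\mu > 0$, and then to push $\mu$ down to $0$.

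The main work, and the main obstacle, is controlling the asymptotics as $t\to -\infty$. The idea is: for a fixed $\mu \in (0, w_\a/2)$, I want to show that for $t$ sufficiently negative, the right cap $\Gamma_t^{\mu,+}$ reflects into the interior of $\Gamma_t$. By Proposition~\ref{edges} (combined with the analysis of Section~\ref{sec:AncientSol}), the ends of $\Gamma_t$ near turning angles $\pm\pi/2$ look, after translation, more and more like the Urbas translator $G_0$, which has width exactly $w_\a$ and whose two boundary edges approach the lines $x = \pm w_\a/2$. The translator $G_0$ is itself symmetric about its mid-line. So near the right edge $x = w_\a/2$, as $t\to -\infty$, the solution $\Gamma_t$ is approximated by a vertical translate of $G_0$; since $\mu > 0$ while the translator's right half occupies only $x \in (w_\a/2 - \delta, w_\a/2)$ for the relevant portion, the reflection $R_\mu$ maps this thin right sliver strictly to the left of itself, and the approximation by $G_0$ (whose reflection comparison is trivially strict for $\mu>0$ away from its own axis) gives the containment. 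One must be careful to combine the local translator asymptotics at the edge with the bulk convexity estimates from Propositions~\ref{X-est} and \ref{area} to get a statement that holds for the whole cap uniformly in $t \le t_\mu$; this is exactly where the quantitative width estimate $h(t) \to w_\a/2$ and the known symmetry of the Urbas translator are used.

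Once the containment at time $-\infty$ is established for each $\mu \in (0, w_\a/2)$, the Alexandrov reflection principle propagates it forward: the reflected right cap stays inside for all $t < 0$, for every such $\mu$. Letting $\mu \downarrow 0$, by continuity the reflected cap $R_0(\Gamma_t^{0,+})$ lies weakly inside the left half of $\Gamma_t$ for all $t$. Applying the same argument with the roles of left and right reversed (equivalently, using $-\mu$), we get the opposite containment, and hence $R_0(\Gamma_t) = \Gamma_t$, i.e.\ $\Gamma_t$ is symmetric about the $y$-axis for all $t < 0$. Alternatively, and perhaps cleaner: the reflection principle forces that for each $\mu > 0$ the solution cannot be symmetric about $\ell_\mu$ (since it is not symmetric about lines that don't bisect the strip, as it fills the whole strip $\Pi$), so the strict comparison holds for all $\mu > 0$; the borderline case is $\mu = 0$, and by the strong maximum principle / Hopf lemma applied to the difference of the two graphical pieces, strict comparison fails at $\mu = 0$ precisely when symmetry holds there — which is the conclusion. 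I expect the write-up to mirror \cite[Lemma~2.4]{BLT3} closely, with Proposition~\ref{edges} substituting for the corresponding edge-asymptotics input in the curve shortening case.
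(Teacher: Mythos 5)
Your proposal is correct and follows essentially the same route as the paper: the paper's proof sets up the same moving-plane scheme with half-spaces $H_\beta$, uses Proposition \ref{edges} (symmetry of the limiting Urbas translator plus convexity) to get the disjointness of the reflected cap for $t<t_\beta$, propagates it to all $t<0$ by the strong maximum principle and boundary point lemma as in \cite[Theorem 2.2]{Chow97}, lets $\beta\to 0$, and then repeats the argument from the other side. The only quibble is your description of the translator's right half as occupying a thin sliver near $x=w_\a/2$ — the relevant point is simply that the limit translator is symmetric about the mid-line, so its reflection about any $\ell_\mu$ with $\mu>0$ lies strictly inside — but this does not affect the argument.
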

\begin{proof}
Set
\[
H_\beta:=\{(x,y)\in \R^2:x<\beta\}
\]
and denote by $R_\beta$ the reflection about $\partial H_\beta$. By Proposition \ref{edges},  the `tips' (or tip if the solution is non compact) of this solution converge to the unique translator as defined in Theorem \ref{Urbas}, which is reflection symmetric. Therefore, by the convexity of $\{\Gamma_t\}_{t\in(-\infty,0)}$, given any $\beta\in (0, \frac{w_\a}{2})$, there exists a time $t_\beta$ such that
\[
(R_\beta\cdot\Gamma_t)\cap (\Gamma_t\cap H_\beta)=\emptyset
\]
for all $t<t_\beta$ (for more details of this fact, see \cite[Claim 6.2.1]{BLT1}). By the strong maximum principle and the boundary point lemma for strictly parabolic equations  this is true for all $t<0$, see \cite[Theorem 2.2]{Chow97}. Taking $\beta\to 0$ implies that $R_0\cdot \Gamma_t$ lies to the left of $\Gamma_t\cap H_0$. The result now follows by repeating the argument with $H_\beta:=\{(x,y)\in \R^2:x>-\beta\}$.
\end{proof}

\begin{theorem}\label{unique}
Modulo translations, there exists only one strictly convex ancient solution $\{\Gamma_t\}_{t\in(-\infty, 0)}$ to the $\k^\a$ flow $\a\in (\tfrac23, 1]$ contained in the strip $\Pi:=\{(x,y)
:|x|<\tfrac{w_\a}{2}\}$ and in no smaller strip.
\end{theorem}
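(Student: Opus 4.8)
The plan is to show that any two strictly convex ancient solutions $\{\Gamma_t^1\}$, $\{\Gamma_t^2\}$ in the strip $\Pi$ (in no smaller strip) coincide after a vertical translation. By Lemma~\ref{lem:refl} both are symmetric about the $y$-axis, so each is determined by its ``right half'' and in particular by the functions $\ell_i(t)=\langle\gamma^i(\pi,t),e_2\rangle$ and $h_i(t)$. The construction in Theorem~\ref{conv} gives one such solution; call it $\{\Gamma_t\}$. The strategy is an Alexandrov reflection argument in \emph{horizontal} lines together with a sliding argument in the vertical direction, exactly as in \cite[Theorem~3.3]{BLT3}, using the refined asymptotics \eqref{h-t} and \eqref{ell-t} which require $\a\in(\tfrac23,1]$.

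First I would record the asymptotics that any such solution must satisfy. By Proposition~\ref{edges} the four tips of $\{\Gamma_t\}$ converge, as $t\to-\infty$, to translates of Urbas' translator $G_t$ at scale $1$; combined with the Harnack monotonicity and the area estimate of Proposition~\ref{area}, one upgrades this to: $h(t)\to\tfrac{w_\a}{2}$ with the rate \eqref{h-t} (true for all $\a\in(1/2,1]$), and, crucially for $\a\in(\tfrac23,1]$, $\ell(t)=-t+O(1)$ as $t\to-\infty$ (this is \eqref{ell-t}). The point of the finiteness $\frac{2-3\a}{1-\a}<0$ is precisely that the ``defect area'' between a timeslice and the ideal strip is bounded, which is what pins down the $O(1)$ in $\ell(t)$ rather than an unbounded error. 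I would prove that these asymptotics hold for \emph{every} solution in the class, not just the constructed one, by running the estimates of Proposition~\ref{t-Restimates} directly on $\{\Gamma_t\}$ (they only used Proposition~\ref{lowerkbbound}, the curvature monotonicity, and the inner-circle comparison, all available for genuine ancient solutions).

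Next comes the comparison. Given solution $\{\Gamma_t^2\}$, after a vertical translation we may assume $\ell_1(t)-\ell_2(t)\to 0$ as $t\to-\infty$ (possible precisely because both are $-t+O(1)$, so their difference is a bounded function; we translate so that $\limsup$ and $\liminf$ of the difference straddle $0$, and in fact a monotonicity-in-$t$ argument forces the limit to be $0$). Now apply the Alexandrov reflection principle about horizontal lines $\partial H_c^{\mathrm{hor}}=\{y=c\}$ to the combined flow: by the asymptotics, for any $c$ the reflected copy $R_c\cdot\Gamma_t^1$ is disjoint from $\Gamma_t^2$ below the line for $t$ sufficiently negative; the strong maximum principle and Hopf lemma for the strictly parabolic equation \eqref{kt} (as in \cite[Theorem~2.2]{Chow97}) then propagate this to all $t<0$. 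Letting the reflection line go to $-\infty$ (or to the common asymptotic tip behaviour) forces $\Gamma_t^1$ and $\Gamma_t^2$ to be ordered in the vertical direction; doing the same with the roles reversed gives the opposite ordering, hence $\Gamma_t^1=\Gamma_t^2$ for all $t$.

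The main obstacle, as flagged by the authors, is establishing the sharp asymptotic $\ell(t)=-t+O(1)$ for an \emph{arbitrary} solution in the class (and the matching control needed to set up the horizontal Alexandrov reflection): for $\a<1$ the enclosed area no longer has constant derivative, so one cannot simply integrate $-A'(t)=\text{const}$ as in the curve shortening case. Instead the area defect must be controlled via Proposition~\ref{lowerkbbound} together with the integrability $\int^{\pi/2}\cos\th^{1-1/\a}\,d\th<\infty$ and the $\theta$-near-$\pi/2$ estimates of Proposition~\ref{X-est}, which is where the threshold $\a>\tfrac23$ enters through $\frac{2-3\a}{1-\a}<0$. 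Once the $O(1)$ control on $\ell$ is in hand, the reflection/maximum-principle machinery is routine and essentially identical to \cite[Theorem~3.3]{BLT3}.
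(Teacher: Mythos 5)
Your proposal correctly identifies the role of the threshold $\a>\tfrac23$ and the general flavour (asymptotics plus reflection/maximum principle), but it hinges on a step that the cited tools do not deliver: establishing $\ell(t)=-t+O(1)$ for an \emph{arbitrary} solution in the class ``by running the estimates of Proposition~\ref{t-Restimates} directly on $\{\Gamma_t\}$.'' Those estimates do not transfer. The upper bound $\ell_R(t)\le -t+C_\a\bigl(1+R^{\frac{2-3\a}{1-\a}}\bigr)$ comes from integrating $-\ell_R'\ge 1$ from the \emph{initial time} $T_R$ and then invoking $-T_R\ge R-C_\a\bigl(1+R^{\frac{2-3\a}{1-\a}}\bigr)$, which is itself deduced from the initial-data area estimate of Proposition~\ref{initial data}(ii) for the specific curves $\Gamma^R$ built from the translator. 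An arbitrary ancient solution has no such anchor: from Proposition~\ref{lowerkbbound} and the area bound $A(t)\le 2w_\a(-t)$ one only gets the \emph{lower} bound $\ell(t)\ge -t$ and, via $A(t)\le w_\a L(t)$, a lower bound on $L(t)$; the upper bound $L(t)\le 2(-t)+O(1)$ is equivalent to the ``defect area'' $w_\a L(t)-A(t)$ staying bounded as $t\to-\infty$, and the locally uniform convergence of the tips to the translator (Proposition~\ref{edges}) does not by itself control this improper integral. The paper sidesteps exactly this difficulty: it only needs the sharp asymptotic \eqref{ell-t} for the \emph{constructed} solution, introduces the monotone quantity $L(t)+2t$ (whose limit $L\in[0,\infty]$ exists for any solution since $\k^\a(0,t),\k^\a(\pi,t)\ge1$), and proves $L=L_0$ by a comparison argument against translates of the constructed solution, using the vertical-line Alexandrov machinery of Lemma~\ref{lem:refl} applied to the two \emph{different} flows.

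Your endgame is also not coherent as stated. Two compact convex curves in the strip that both become extinct at the origin at $t=0$ cannot be ``ordered in the vertical direction''; by the avoidance principle they must intersect for every $t<0$. The correct conclusion of the reflection/comparison step is a \emph{containment} statement ($\Gamma_t$ encloses $\mathrm{\Gamma^0}_t+c_0$), which contradicts common extinction if the enclosure is strict; the paper then removes the residual translation by applying the argument to the time-shifted solutions $\Gamma_{t+\tau}$ (for which $L_\tau>L_0$ strictly) and letting $\tau\to0$, after which the strong maximum principle forces coincidence. Without the monotone quantity $L(t)+2t$, the normalization at the extinction point, and this $\tau$-shift trick, your horizontal-reflection scheme does not close.
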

\begin{proof}
After a time and space translation, we can assume that the solution gets extinct at the origin at time $t=0$. Let 
\[
L(t)=-\langle \gamma(0, t), e_2\rangle+\langle \gamma(\pi, t), e_2\rangle\,.\]
By  Proposition \ref{lowerkbbound} we have $\k^\a(0, t)\ge 1$ and $\k^\a(\pi,t)\ge 1$ thus
\[
\frac{d}{dt}\left(L(t)+2t\right)\le 0\,.
\]
Therefore the limit
\[
L:=\lim_{t\to-\infty}(L(t)+2t)
\]
exists in $[0,\infty]$.

This quantity, for the particular compact solution we constructed in Theorem \ref{conv}, satisfies, by  \eqref{ell-t},
\[
L_0:=\lim_{t\to -\infty}(L_0(t)+2t)<\infty
\]
where $L_0(t)=2\ell(t)$, with $\ell(t)$ as in Theorem \ref{conv}. 
We next claim that for any solution, 
\begin{equation}\label{claim}
L=L_0\,.
\end{equation}
Suppose, contrary to the claim, that $L>L_0$ (the case $L<L_0$ is ruled out similarly). Then we can find $t_0$ such that 
\begin{equation}\label{eq:ellt0}
L(t)>L_0(t) \,\,\text{ for all }t<t_0\,.
\end{equation}
Let $\{\mathrm{\Gamma^0}_t\}_{t\in(-\infty,0)}$ be the particular solution constructed in Theorem~\ref{conv}. 
Define the halfspaces $H_\beta$ and the reflection $R_\beta$ about $\partial H_\beta$ as in Lemma \ref{lem:refl}. Given any $\beta\in(0,\frac{w_\a}{2})$, set
\[
\wt{\mathrm{\Gamma^0}}_t=(R_\beta\cdot \mathrm{\Gamma^0}_t)\cap \{(x,y)\in \R^2:x<0\}\,,
\]
 and
\[
\wt \Gamma_t=\Gamma_t\cap \{(x,y)\in \R^2:x<0\}\,,
\]
and let $(-r^0_t, r^0_t)=\partial\wt{\mathrm{\Gamma^0}}_t$ and $(r^-_t, r^+_t)=\partial\wt\Gamma_t$.
Then, by \eqref{eq:ellt0} and since $L_0$ is finite, there exists $c>0$ such that for all $t<t_0$, there exists $c_t\in (-c, c)$, such that $(-r^0_t+c_t, r^0_t+c_t)\subset (r^-_t, r^+_t)$. Moreover, by convexity and the convergence of the tips given in Proposition \ref{edges}, there exists $t_\beta<t_0$ depending on $\beta$ such that $(\wt{\mathrm{\Gamma^0}}_t+c_t)\cap\wt\Gamma_t=\emptyset$ for all $t<t_\beta$, with $c_t$ as above. It then follows by the strong maximum principle that $(\wt{\mathrm{\Gamma^0}}_s+c_t)\cap\wt\Gamma_s=\emptyset$ for all $t<t_\beta$ and $t<s<t_0$. Letting $\beta\searrow 0$, we find that there exists a constant $c_0$ such that  $\wt \Gamma_t\cap(\mathrm{\Gamma^0}_t+c_0)=\emptyset$ for all $t<t_0$. By Theprem \ref{lem:refl} we thus obtain that $\Gamma_t$ contains $\Gamma^0_t+c_0$ for all $t<t_0$, contradicting the fact that both curves reach the origin at time $t=0$. This finishes the proof of \eqref{claim}.

Now consider, for any $\t>0$, the solution $\{\Gamma_t^\tau\}_{t\in(-\infty,0)}$ defined by $\Gamma_t^\t=\Gamma_{t+\tau}$. Since
\[
L_\t:=\lim_{t\to-\infty}(\ell(t+\t)+t)> L= L_0\,,
\]
we may argue as above to conclude that $\Gamma_t^\t$ lies outside $\mathrm{\Gamma^0}_t +c_\tau$ for some constant $c_\tau$ and for all $t<0$. Taking $\tau\to 0$, we find that $\Gamma_t$ lies outside $\mathrm{\Gamma^0}_t$ for all $t<0$. Since the two curves reach the origin at time zero, they intersect for all $t<0$ by the avoidance principle. The strong maximum principle then implies that the two coincide for all $t$.
\end{proof}

\begin{proof}[Proof of Theorem \ref{thm2}]
The non-compact case has been shown in Theorem \ref{thm2II}. For the compact case and $\a\in (\frac23,1]$,
we have shown uniqueness as long as our solution is restricted in a slab. The theorem now follows by a result of Chen \cite{Chen15}, which states that if the solution is  not defined in a slab, then it must be the shrinking circle. 
\end{proof}

\bibliographystyle{acm}  
\bibliography{bibliography.bib}

\end{document}